\newtheorem{lemma}{Lemma}[section]
\newtheorem{theorem}[lemma]{Theorem}
\newtheorem{proposition}[lemma]{Proposition}
\newtheorem{corollary}[lemma]{Corollary}
\renewenvironment{proof}[1][\proofname]{{\sc #1. }}{\qed}
\theoremstyle{definition}
\newtheorem{remark}[lemma]{Remark}
\newtheorem{example}[lemma]{Example}
\newtheorem{defin}[lemma]{Definition}{\bf}{\rm}
\newcommand{\abs}[1]{\ensuremath{\left| #1 \right|}}
\newcommand{\op}{\operatorname}
\newcommand{\ce}[2]{\operatorname{C}_{#1}(#2)}
\newcommand{\no}[2]{\operatorname{N}_{#1}(#2)}
\newcommand{\ze}[1]{\operatorname{Z}(#1)}
\newcommand{\fra}[1]{\op{\Phi}(#1)}
\newcommand{\fit}[1]{\operatorname{F}(#1)}
\newcommand{\rad}[2]{\op{O}_{#1}(#2)}
\newcommand{\syl}[2]{\op{Syl}_{#1}\left(#2\right)}
\newcommand{\hall}[2]{\op{Hall}_{#1}\left(#2\right)}
\begin{document}

\title{\bf Zeros of irreducible characters in factorised groups}

\author{\sc M. J. Felipe $\cdot$ A. Mart\'inez-Pastor $\cdot$ V. M. Ortiz-Sotomayor
\thanks{The first author is supported by Proyecto Prometeo II/2015/011, Generalitat Valenciana (Spain). The second author is supported by Proyecto MTM 2014-54707-C3-1-P, Ministerio de Econom\'ia, Industria y Competitividad (Spain), and by Proyecto Prometeo/2017/057, Generalitat Valenciana (Spain). The third author acknowledges the predoctoral grant ACIF/2016/170, Generalitat Valenciana (Spain).
\rule{6cm}{0.1mm}\newline
Instituto Universitario de Matemática Pura y Aplicada (IUMPA-UPV), Universitat Polit\`ecnica de Val\`encia, Camino de Vera s/n, 46022 Valencia, Spain\newline
\Letter: \texttt{mfelipe@mat.upv.es}, \texttt{anamarti@mat.upv.es}, \texttt{vicorso@doctor.upv.es} \newline
ORCID iDs: 0000-0002-6699-3135, 0000-0002-0208-4098, 0000-0001-8649-5742
}}

\date{}

\maketitle

\begin{abstract}
\noindent An element $g$ of a finite group $G$ is said to be \emph{vanishing in} $G$ if there exists an irreducible character $\chi$ of $G$ such that $\chi(g)=0$; in this case, $g$ is also called a \emph{zero} of $G$. The aim of this paper is to obtain structural properties of a factorised group $G=AB$ when we impose some conditions on prime power order elements $g\in A\cup B$ which are (non-)vanishing in $G$. 

\medskip

\noindent \textbf{Keywords} Finite groups $\cdot$ Products of groups $\cdot$ Irreducible characters $\cdot$ Conjugacy classes $\cdot$ Vanishing elements

\smallskip

\noindent \textbf{2010 MSC} 20D40 $\cdot$ 20C15 $\cdot$ 20E45
\end{abstract}


\section{Introduction}

Within finite group theory, the close relationship between character theory and the study of conjugacy classes is widely known. Regarding this last topic, several authors have investigated the connection between certain conjugacy class sizes (also called indices of elements) of a group $G$ and its structure. Further, recent results show up that the conjugacy classes of the elements in the factors of a factorised group exert a strong impact on the structure of the whole group (see \cite{BCL, CL, FMOsquare, FMOprime}). 

In character theory, a celebrated Burnside's result asserts: every row in a character table of a finite group which corresponds to a non-linear complex character has a zero entry \cite[Theorem 3.15]{I}. Nevertheless, a non-central conjugacy class column may not contain a zero. This fact somehow violates the standard duality arising in many cases between the two referred research lines. Therefore, in \cite{INW} the authors introduce the next concept: an element $g\in G$ is \emph{vanishing in} $G$ if there exists an irreducible character $\chi$ of $G$ such that $\chi(g)=0$ (in the literature, $g$ is also called a \emph{zero} of $\chi$). Otherwise the element $g$ is said to be \emph{non-vanishing in} $G$. As an immediate consequence of the cited Burnside's result, we get that a group has no vanishing elements if and only if it is abelian. It is to be said that various questions concerning (non-)vanishing elements have been studied by numerous authors (in particular, those appearing as references in this paper).

It is therefore natural to wonder whether results based on conjugacy class sizes remain true if we restrict focus only to those indices that correspond to vanishing elements, i.e. if we consider only the \textit{vanishing indices}. In this spirit, some researchers have recently obtained positive results in certain cases. For instance, in 2010, Dolfi, Pacifici and Sanus proved that if a prime $p$ does not divide each vanishing index of a group $G$, then $G$ has a normal $p$-complement and abelian Sylow $p$-subgroups \cite[Theorem A]{DPS}. In 2016, Brough showed that for a fixed prime $p$ such that $(p-1, \abs{G})=1$, if all vanishing indices of $G$ are not divisible by $p^2$, then $G$ is soluble \cite[Theorem A]{B}. Moreover, if each vanishing index of $G$ is square-free, then $G$ is supersoluble \cite[Theorem B]{B}. The last two results turn to be the ``vanishing versions'' of \cite[Theorem 1]{CW} and \cite[Theorem 2]{CW}, respectively. Besides, Brough and Kong have also showed in \cite{BK} that the hypotheses in the previous results can be weakened to vanishing indices of prime power order elements. We remark that the classification of finite simple groups (CFSG) is used in this development.

In this paper, we are interested in combining as a novelty the research on irreducible characters with the study of products of groups. More concretely, we want to analyse which information of a factorised group $G$ can be obtained from its character table when we consider the conjugacy classes in $G$ of elements in the factors. In particular, inspired by the aforementioned investigations, we deal with factorised groups having irreducible characters which evaluate zero on some elements in the factors. It is worthwhile to note that the product of two vanishing elements needs not to be vanishing in general. Moreover, an element in a (normal) subgroup can be vanishing in the whole group but not in that subgroup (see Example \ref{example_van}).

Focusing in products of groups, along the last decades, some relations of permutability between the factors have been considered by many authors, as for instance total permutability, mutual permutability (see \cite{BEA}) and tcc-permutability (see \cite{AAP, AAMP}). These last permutability relations are inherited by quotients, and they ensure the existence of a minimal normal subgroup contained in one of the factors. We are principally concerned about products of groups that satisfy both particular conditions, which we will name \emph{core-factorisations} (see Definition \ref{definition_core}). 

In this framework, our purpose is to get a better understanding of how the vanishing elements in the factors control the structure of a group with a core-factorisation. Moreover, we will also deal with arithmetical conditions on the indices of those elements.

The paper is structured in the following way: Firstly, core-factorisations are defined in Section \ref{sec_core} and some properties of them, which will be crucial along the paper, are proved. In Section \ref{sec_van}, we analyse the case that a group with a core-factorisation has no vanishing $p$-elements in the factors for a prime $p$ (see Theorem \ref{SylowNormal}). As a consequence, we obtain information of a factorised group when all prime divisors of its order are considered, that is, when there are no vanishing prime power order elements in the factors (see Corollary \ref{propositionKEY}). Later on we obtain structural properties of groups with a core-factorisation from the vanishing indices in the whole group of some elements in the factors. Concretely, in Section \ref{sec_prime}, we study the case when those vanishing indices are prime powers (Theorem \ref{theoremCCvanishing} and Corollary \ref{G/Fabelian}). Next, we focus in Section \ref{sec_square} on the case that the indices are not divisible by a prime $p$ (see Theorem \ref{theorem_p-reg}). The situation when those indices are square-free is also handled in this last section (see  Theorems \ref{theoremp2} and \ref{theoremSquare}). In particular, we highlight that an affirmative answer to a question posed by Brough in \cite{B} is given (Corollary \ref{corollary_pnilp}). It is significant to mention again that all the previous results for core-factorisations will remain true when the factors are either totally, mutually or tcc-permutable (see Example \ref{exampleCore}). We remark that, in order to avoid repeating arguments from previous papers, when some proof runs as in the one of a known result with suitable changes, we refer to the corresponding one.

Throughout this paper, every group is assumed to be finite. The terminology here is as follows: for a group $G$ and an element $x\in G$, we call $i_G(x)$ the \emph{index} of $x$ in $G$, that is, $i_G(x)=\abs{G:\ce{G}{x}}$ is the size of the conjugacy class $x^G$. The set of prime divisors of the order of $G$ is denoted by $\pi(G)$. If $p$ is a prime, then $x\in G$ is a $p$\emph{-regular} element if its order is not divisible by $p$. As customary, the set of all Sylow $p$-subgroups of $G$ is denoted by $\syl{p}{G}$, whilst $\hall{\pi}{G}$ is the set of all Hall $\pi$-subgroups of $G$ for a set of primes $\pi$. We write $\op{Irr}(G)$ for the set of all irreducible complex characters of $G$. Given a group $G=AB$ which is the product of the subgroups $A$ and $B$, a subgroup $S$ is called \emph{prefactorised} (with respect to this factorisation) if $S=(S\cap A)(S\cap B)$ (see \cite{BEA}). We recall that a subgroup $U$ \emph{covers} a section $V/W$ of a group $G$ if $W(U\cap V)=V$. The remainder notation is standard, and it is taken mainly from \cite{DH}. In particular, a normal subgroup $N$ of a group $G$ such that $N\neq G$ is denoted symbolically by $N\lhd G$. We also refer to \cite{DH} for details about classes of groups.


\section{Core-factorisations: definition and properties}
\label{sec_core}

We analyse in this section the kind of factorisations we manage along the paper.

\begin{defin}
Let $1\neq G=AB$ be the product of the subgroups $A$ and $B$. We say that $G=AB$ is a \emph{core-factorisation} if for every proper normal subgroup $K$ of $G$ it holds that there exists a normal subgroup $1\neq M/K$ of $G/K$ such that either $M/K\leqslant AK/K$ or $M/K\leqslant BK/K$ (i.e. either $A$ or $B$ covers $M/K$).
\label{definition_core}
\end{defin}

Note that if we adopt the bar convention for the quotients over $K$, the above condition means that $\overline{A}_{\overline{G}} \overline{B}_{\overline{G}}\neq 1$, where $H_X$ denotes the core in a group $X$ of a subgroup $H$. This illustrates the given name for such factorisations.

\begin{remark}
Let state some immediate facts:
\begin{enumerate}
	\item If either $1\neq G=A$ or $1\neq G=B$, then $G=AB$ is always a core-factorisation. 
	
	\item If $G=AB$ is a core-factorisation of a simple group $G$, then either $G=A$ or $G=B$.
	
	\item If we take $K=1$ in the above definition, then there exists a (minimal) normal subgroup of $G=AB$ contained in either $A$ or $B$.
\end{enumerate}
\end{remark}

We present now some non-trivial examples.

\begin{example}
\label{exampleCore}
Let $1\neq G=AB$ be the product of the subgroups $A$ and $B$, and let assume that $A$ and $B$ satisfy one of the following permutability properties: 
\begin{itemize}
	\item[(i)] $A$ and $B$ are mutually permutable, that is, $A$ permutes with every subgroup of $B$ and $B$ permutes with every subgroup of $A$.

	\item[(ii)] $A$ and $B$ are tcc-permutable, that is, if for every subgroup $X$ of $A$ and every subgroup $Y$ of $B$, there exists $g\in\langle X, Y\rangle$ such that $X$ permutes with $Y^g$.
	
	\item[(iii)] $A$ and $B$ are totally permutable, that is, every subgroup of $A$ permutes with every subgroup of $B$. (In particular, if this property holds, then $A$ and $B$ satisfy both (i) and (ii).)
\end{itemize}
Applying \cite[Theorem 4.3.11]{BEA} in (i) and \cite[Lemma 2.5]{AAP} in (ii), it can be seen that $A_G B_G\neq 1$. Also, the above permutability properties are clearly inherited by quotients. Thus $G=AB$ is a core-factorisation in all cases. We shall see later in Example \ref{tcc-core} a group with with a core-factorisation whose factors are neither mutually permutable nor tcc-permutable.
\end{example}

Now we prove that the quotients of core-factorisations inherit the property.

\begin{lemma}
\label{lemacore}
Let $G=AB$ be a core-factorisation, and let $M$ be a proper normal subgroup of $G$. Then $G/M=(AM/M)(BM/M)$ is also a core-factorisation.
\end{lemma}

\begin{proof}
Let use the bar convention to denote the quotients over $M$. We take a normal subgroup $\overline{K} \lhd \overline{G}$, and we claim that there exists a normal subgroup $1\neq \overline{N}/\overline{K}$ of $\overline{G}/\overline{K}$ covered by either $\overline{A}$ or $\overline{B}$. As $G = AB$ is a core-factorisation, then $G/K$ has a normal subgroup $1\neq N/K$ such that either $N/K$ is covered by either $A$ or $B$.  It follows $$1\neq \frac{\overline{N}}{\overline{K}} = \frac{N/M}{K/M} \leqslant \frac{AK/M}{K/M}=\frac{(AM/M)(K/M)}{K/M}=\frac{\overline{A}\overline{K}}{\overline{K}},$$ or analogously the same is valid for $B$ instead of $A$.
\end{proof}

\medskip

The lemma below is a characterisation of core-factorisations via normal series.

\begin{lemma}
\label{core_charac}
Let $1\neq G=AB$ be the product of the subgroups $A$ and $B$. The following statements are pairwise equivalent:
\begin{enumerate}
	\item[(i)] $G=AB$ is a core-factorisation.
	
	\item[(ii)] There exists a normal series $1=N_0 \unlhd N_1 \unlhd \cdots \unlhd N_{n-1} \unlhd N_n=G$ such that either $N_i/N_{i-1} \leqslant AN_{i-1}/N_{i-1}$ or $N_i/N_{i-1} \leqslant BN_{i-1}/N_{i-1}$, for each $1\leq i \leq n$ (i.e. $N_i/N_{i-1}$ is covered by either $A$ or $B$).
	
	\item[(iii)] There exists a chief series $1=N_0 \unlhd N_1 \unlhd \cdots \unlhd N_{n-1} \unlhd N_n=G$ such that either $N_i/N_{i-1} \leqslant AN_{i-1}/N_{i-1}$ or $N_i/N_{i-1} \leqslant BN_{i-1}/N_{i-1}$, for each $1\leq i \leq n$ (i.e. $N_i/N_{i-1}$ is covered by either $A$ or $B$).
\end{enumerate}  
Further, each term $N_i$ of such (chief) normal series is prefactorised and $N_i=(N_i\cap A)(N_i\cap B)$ is also a core-factorisation.
\end{lemma}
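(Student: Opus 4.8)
The plan is to establish the cyclic chain of implications $(i)\Rightarrow(iii)\Rightarrow(ii)\Rightarrow(i)$, which yields the pairwise equivalence, and then to treat the final assertion separately. Throughout I would freely use the equivalence, already implicit in Definition~\ref{definition_core}, between the two descriptions of the covering condition: for $W\leqslant V$ normal in $G$ and $U\in\{A,B\}$, one has that $U$ covers $V/W$ (that is, $W(U\cap V)=V$) if and only if $V\leqslant UW$, equivalently $V/W\leqslant UW/W$; the translation between the two is an immediate application of Dedekind's modular law. The implication $(iii)\Rightarrow(ii)$ is then immediate, since a chief series is in particular a normal series and the covering hypothesis is identical in both statements.

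For $(i)\Rightarrow(iii)$ I would argue by induction on $\abs{G}$. Taking $K=1$ in Definition~\ref{definition_core} produces a non-trivial normal subgroup of $G$ lying inside $A$ or inside $B$; any minimal normal subgroup $N_1$ of $G$ contained in it is then itself covered by $A$ or by $B$. If $N_1=G$ the chief series $1\lhd G$ already works; otherwise $N_1$ is a proper normal subgroup, so by Lemma~\ref{lemacore} the quotient $G/N_1=(AN_1/N_1)(BN_1/N_1)$ is again a core-factorisation, and by induction it admits a chief series with the covering property relative to the factors $AN_1/N_1$ and $BN_1/N_1$. Pulling this series back through the correspondence theorem and prepending $1\lhd N_1$ gives a chief series of $G$; the covering condition transfers verbatim because, writing $\overline{\phantom{x}}$ for quotients over $N_1$, one has $\overline{N_j}\leqslant \overline{A}\,\overline{N_{j-1}}$ if and only if $N_j\leqslant AN_{j-1}$.

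For $(ii)\Rightarrow(i)$, let $1=N_0\unlhd\cdots\unlhd N_n=G$ be a normal series as in $(ii)$ and let $K\lhd G$ be a proper normal subgroup. Since $N_n=G\not\leqslant K$, I would pick the least index $i$ with $N_i\not\leqslant K$, so that $N_{i-1}\leqslant K$. If, say, $A$ covers $N_i/N_{i-1}$, then $N_i\leqslant AN_{i-1}\leqslant AK$, whence $M:=KN_i$ is a normal subgroup of $G$ with $K\lneq M\leqslant AK$. Thus $1\neq M/K\leqslant AK/K$ witnesses the core-factorisation condition for $K$, and $(i)$ follows.

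It remains to prove the final assertion. I would first show by induction on $i$ that each $N_i$ is prefactorised: assuming $N_{i-1}=(N_{i-1}\cap A)(N_{i-1}\cap B)$ and that (say) $A$ covers $N_i/N_{i-1}$, Dedekind's law gives $N_i=N_i\cap AN_{i-1}=(N_i\cap A)N_{i-1}$, and substituting the factorisation of $N_{i-1}$ and absorbing $N_{i-1}\cap A$ into $N_i\cap A$ yields $N_i=(N_i\cap A)(N_i\cap B)$. Finally, to see that $N_i=(N_i\cap A)(N_i\cap B)$ is itself a core-factorisation, I would observe that $1=N_0\unlhd\cdots\unlhd N_i$ is a normal series of $N_i$ each of whose factors is covered by $N_i\cap A$ or $N_i\cap B$: indeed, if $A$ covers $N_j/N_{j-1}$ in $G$ then, using $(N_i\cap A)\cap N_j=N_j\cap A$ together with the Dedekind identity $N_{j-1}(N_j\cap A)=N_j$ already obtained, $N_i\cap A$ covers $N_j/N_{j-1}$ inside $N_i$; the already proved implication $(ii)\Rightarrow(i)$, applied to $N_i$, then gives the claim. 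The main obstacle is organising the repeated passage between the covering condition in $G$, in the quotients $G/N_{i-1}$, and inside the subgroups $N_i$; all three reductions rest on careful bookkeeping with Dedekind's modular law, which is where I would concentrate the verification.
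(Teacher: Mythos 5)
Your proof is correct, and it runs largely parallel to the paper's; the one genuine organisational difference is how the chief series is obtained. The paper proves (i) $\Rightarrow$ (ii) by iterating Definition~\ref{definition_core} together with Lemma~\ref{lemacore} to build a normal series with covered factors, and then proves (ii) $\Rightarrow$ (iii) by refining that series to a chief series, using Dedekind's law to check that the covering condition survives refinement (if $N_{i+1}\leqslant AN_i$ and $N_i\leqslant T_{j-1}\unlhd T_j\leqslant N_{i+1}$, then $T_j=(T_j\cap A)N_i\leqslant AT_{j-1}$). You instead prove (i) $\Rightarrow$ (iii) directly, by induction on $\abs{G}$: take a minimal normal subgroup $N_1$ inside the covered normal subgroup supplied by $K=1$, pass to $G/N_1$ via Lemma~\ref{lemacore}, and pull a chief series back through the correspondence theorem, noting that $\overline{N_j}\leqslant \overline{A}\,\overline{N_{j-1}}$ if and only if $N_j\leqslant AN_{j-1}$ since $N_1\leqslant N_{j-1}$. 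This makes (iii) $\Rightarrow$ (ii) trivial and dispenses with the refinement computation entirely; the trade-off is essentially cosmetic, exchanging the paper's refinement check for the (equally routine) transfer of covering along the correspondence. Everything else in your argument coincides with the paper's proof almost word for word: your (ii) $\Rightarrow$ (i) via the least $i$ with $N_i\nleqslant K$ (so $N_{i-1}\leqslant K$ and $KN_i/K\leqslant AK/K$) is exactly the paper's (iii) $\Rightarrow$ (i); the induction showing each $N_i$ is prefactorised is identical; and your verification that $A$ covering $N_j/N_{j-1}$ in $G$ forces $N_i\cap A$ to cover it inside $N_i$ is the paper's computation $N_m\leqslant N_{m-1}A\cap N_i=N_{m-1}(A\cap N_i)$, followed, as in the paper, by an appeal to the already-established implication (ii) $\Rightarrow$ (i) applied to $N_i=(N_i\cap A)(N_i\cap B)$.
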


\begin{proof}
(i) implies (ii): Let $1\neq N_1\unlhd G$ such that either $N_1\leqslant A$ or $N_1\leqslant B$, so $1\lhd N_1\unlhd G$. Next, take $G/N_1=(AN_1/N_1)(BN_1/N_1)$. If $G/N_1=1$, then we have the desired series. If $1\neq G/N_1$, then it is again a core-factorisation by the previous lemma. Therefore, there exists $1\neq N_2/N_1\unlhd G/N_1$ such that either $N_2/N_1\leqslant AN_1/N_1$ or $N_2/N_1\leqslant BN_1/N_1$. So we get the series $1\lhd N_1 \lhd N_2\unlhd G$. Repeating this process until we reach a trivial quotient $G/N_{j}$, we get the desired series.

(ii) implies (iii):  If we refine the series in (ii) to a chief series, then we get for each factor that there exist $N_i=T_0 \unlhd T_1 \unlhd T_2 \unlhd \cdots \unlhd T_k= N_{i+1}$ such that each $T_j/T_{j-1}$ is a minimal normal subgroup of $G/T_{j-1}$. Let see that either $T_j/T_{j-1}\leqslant AT_{j-1}/T_{j-1}$ or $T_j/T_{j-1}\leqslant BT_{j-1}/T_{j-1}$. We may assume for instance $N_{i+1}\leqslant AN_i$. Thus $T_j=T_j\cap N_{i+1}\leqslant N_i(T_j\cap A)\leqslant T_{j-1}A$, and so $T_j/T_{j-1}\leqslant AT_{j-1}/T_{j-1}$.

(iii) implies (i): We have to show that for each $K\lhd G$, there exists a non-trivial normal subgroup of $G/K$ covered by either $A$ or $B$. Let $1\leq r \leq n$ be the minimum number such that $N_r\nleqslant K$. Then $1\neq N_rK/K$ is normal in $G/K$. Let suppose for instance that $N_r/N_{r-1}\leqslant AN_{r-1}/N_{r-1}$, so $N_r \leqslant AN_{r-1}$. By the minimality of $r$ it follows $N_rK/K \leqslant AK/K$.

Now we claim that each $N_i$ in such (chief) normal series is prefactorised, and we work by induction on $i$. The case $i=1$ is clear since either $N_1\leqslant A$ or $N_1\leqslant B$. Now we assume that $N_{i-1}=(N_{i-1} \cap A)(N_{i-1}\cap B)$ and we want to show that $N_i$ is also prefactorised. We may consider $N_i\leqslant AN_{i-1}$, and then $N_i=(N_i\cap A)N_{i-1}=(N_i\cap A)(N_{i-1} \cap A)(N_{i-1}\cap B)\subseteq (N_i\cap A)(N_i\cap B)\subseteq N_i$.

Fix a prefactorised $N_i=(N_i\cap A)(N_i\cap B)$ of a (chief) normal series of $G$ like in (ii) or (iii), for some $i\in\{1,\ldots, n\}$. We are showing that $N_i=(N_i\cap A)(N_i\cap B)$ is a core-factorisation. Consider the following portion of such (chief) normal series $1=N_0 \unlhd N_1 \unlhd \cdots \unlhd N_{i}$. Let $m\in\{1, \ldots, i\}$. We claim that $N_m$ satisfies either $N_m\leqslant (N_i\cap A)N_{m-1}$ or $N_m\leqslant (N_i\cap B)N_{m-1}$ in order to apply the equivalence between (ii) and (i). We have by assumption that for instance $N_m\leqslant N_{m-1}A$, so $N_m\leqslant N_{m-1}A \cap N_i=N_{m-1}(A\cap N_i)$. The lemma is now established. 
\end{proof}

\medskip

We point out that if $N$ is an arbitrary prefactorised normal subgroup of a core-factorisation $G=AB$, then $N=(N\cap A)(N\cap B)$ might not be a core-factorisation, as the next example shows.

\begin{example}
Consider $G=\op{Sym}(4) \times \langle x\rangle$, where $\op{Sym}(4)$ denotes the symmetric group of $4$ letters and $o(x)=2$. If $A=\langle ((1,2), x),\: ((3,4), x), \: ((1,3)(2,4), x)\rangle$ and $B=\langle ((2,3,4), 1), \: ((3,4), 1), \:(1, x)\rangle$, then $G=AB$ is a core-factorisation, and $N=\op{Sym}(4)=(N\cap A)(N\cap B)$ is not a core-factorisation, since there is no minimal normal subgroup of $N$ neither in $N\cap A$ nor in $N\cap B$. Moreover, it can be seen that $A$ and $B$ are not either mutually nor tcc-permutable.
\label{tcc-core}
\end{example}


\section{On vanishing elements}
\label{sec_van}

The main objective of this section is to prove Theorem \ref{SylowNormal} and Corollary \ref{propositionKEY}. Let state first some key ingredients for locating vanishing elements in a given group.

\begin{lemma}\emph{\cite[Lemma 2.9]{DPSS}}
\label{contradiction_lemma}
Let $N\leqslant M\leqslant G$, with $N$ and $M$ normal in $G$ and $(\abs{N}, \abs{M/N})=1$. If $N$ is minimal normal in $G$, $\ce{M}{N}\leqslant N$ and $M/N$ is abelian, then every element in $M\smallsetminus N$ is vanishing in $G$.
\end{lemma}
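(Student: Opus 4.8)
The plan is to produce, for each $g \in M \smallsetminus N$, an irreducible character of $G$ lying over a suitable $\lambda \in \op{Irr}(N)$ and to force a zero at $g$ through Clifford theory. First I would simplify the element. Since $(\abs{N},\abs{M/N})=1$, writing $\pi=\pi(N)$ the quotient $M/N$ is a $\pi'$-group, so the $\pi$-part of $g$ lies in $N$ and $gN=uN$ where $u$ is the $\pi'$-part of $g$; in particular $u\notin N$, and since $\ce{M}{N}\leqslant N$ this gives $[N,u]\neq 1$, i.e. $u$ acts non-trivially and coprimely on $N$. The key observation is that conjugation by elements of $N$ fixes every character of $N$, whence $\lambda^g=\lambda^u$ and, more generally, $\lambda^{g^x}=\lambda^{u^x}$ for all $x\in G$ and all $\lambda\in\op{Irr}(N)$. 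Thus $g$ and $u$ act identically on $\op{Irr}(N)$, and the whole problem may be recast in terms of the coprime action of $u$.

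Next I would invoke the standard inertia/induction criterion: if $\lambda\in\op{Irr}(N)$ is such that no $G$-conjugate of $\lambda$ is fixed by $u$, then, taking the Clifford correspondent $\xi\in\op{Irr}(I_G(\lambda)\mid\lambda)$ and inducing, the character $\chi=\xi^{G}\in\op{Irr}(G\mid\lambda)$ satisfies $\chi(g)=0$, because then no $G$-conjugate of $g$ lies in $I_G(\lambda)$ and the induction formula reduces to an empty sum. Hence it suffices to exhibit one $\lambda\in\op{Irr}(N)$ fixed by no conjugate of $u$. Here the hypotheses on $N$ enter decisively: setting $R=\langle u\rangle^{G}\unlhd G$, one has $R\leqslant M$ and $R\nleqslant N$, so $\ce{N}{R}=N\cap\ce{G}{R}$ is a normal subgroup of $G$ properly contained in the minimal normal subgroup $N$, whence $\ce{N}{R}=1$. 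This triviality of the global fixed points, together with the coprimality, is what one feeds into an orbit argument, treated according to whether $N$ is elementary abelian (working in the dual module, where one must avoid a $G$-invariant family of proper centraliser subspaces) or a direct power of a non-abelian simple group (choosing $\lambda$ as a tensor product of characters of the simple factors displaced by $u$).

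Should it prove impossible to avoid a $u$-fixed character in some orbit, I would switch to the complementary mechanism, available precisely because $M/N$ is abelian. If $\lambda$ is $g$-invariant, then $g\in I_M(\lambda)$ and, since $[M,M]\leqslant N$, the section $I_M(\lambda)/N$ carries an alternating form built from $\lambda$ whose degeneracy governs the vanishing: every character in $\op{Irr}(I_M(\lambda)\mid\lambda)$ vanishes on the elements whose image lies off the radical of this form, and $\ce{M}{N}\leqslant N$ with $N$ minimal normal is exactly what forces $gN$ to lie off that radical. Propagating this zero up to $G$ through the Clifford correspondence then yields the desired $\chi\in\op{Irr}(G)$ with $\chi(g)=0$. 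I expect the main obstacle to be precisely this dichotomy: guaranteeing in every configuration that a single character of $N$ escapes the fixed points of all conjugates of $u$, and, in the residual case, checking that the symplectic-form zero survives induction to $G$. The abelian-versus-non-abelian split of $N$ and the careful exploitation of $\ce{M}{N}\leqslant N$ are the delicate points.
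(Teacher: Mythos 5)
Note first that the paper does not prove this lemma at all: it is quoted verbatim from \cite[Lemma 2.9]{DPSS}. The proof there runs along the skeleton of your ``branch A'', but with a crucial strengthening that you never establish: using that $M/N$ is abelian and acts faithfully (by $\ce{M}{N}\leqslant N$) and coprimely on $N$, one produces $\theta\in\op{Irr}(N)$ lying in a \emph{regular} orbit, i.e.\ with $I_M(\theta)=N$; then \emph{every} $\chi\in\op{Irr}(G)$ over $\theta$ vanishes on all of $M\smallsetminus N$ simultaneously, since each constituent of $\chi_M$ lies over some conjugate $\theta^x$ with $I_M(\theta^x)=I_M(\theta)^x=N$, hence equals $(\theta^x)^M$ and vanishes off $N$. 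Your preparatory steps are fine and match this: the reduction to the $\pi'$-part $u$, the fact that $N$ acts trivially on $\op{Irr}(N)$ so $g$ and $u$ agree on characters, the induced-character vanishing criterion, and $\ce{N}{R}=1$ by minimal normality. The genuine gap is the existence statement you rely on: that some $\lambda\in\op{Irr}(N)$ is fixed by no $G$-conjugate of $u$. The triviality $\ce{N}{R}=1$ only says the conjugates of $u$ have no \emph{common} fixed points; it in no way prevents the union of the fixed-point sets of the individual $u^x$ from covering $\op{Irr}(N)$ --- over a small field a union of proper subspaces can exhaust the dual module, and your ``orbit argument'' supplies no counting to rule this out. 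The missing input is precisely the regular-orbit theorem for faithful coprime actions of \emph{abelian} groups (with the case of $N$ a product of non-abelian simple groups requiring additional facts about coprime automorphisms of simple groups); tellingly, your branch A never uses the hypothesis that $M/N$ is abelian, which is the heart of the lemma.

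Your fallback ``branch B'' cannot rescue the dichotomy, because it is unsound in this coprime setting. If $\lambda$ is invariant in $I_M(\lambda)$, then since $o(\lambda)\lambda(1)$ divides $\abs{N}$ and $(\abs{N},\abs{M/N})=1$, Isaacs' coprime extension theorem \cite[Theorems 6.28, 8.16]{I} yields an extension $\hat{\lambda}$ of $\lambda$ to $I_M(\lambda)$, and by Gallagher the characters of $I_M(\lambda)$ over $\lambda$ are exactly the $\hat{\lambda}\beta$ with $\beta\in\op{Irr}(I_M(\lambda)/N)$. Nothing is ramified: the alternating form you invoke is identically degenerate, its radical is the whole section $I_M(\lambda)/N$, so the ``vanishing off the radical'' mechanism produces no zeros whatsoever --- quite apart from the fact that this fully-ramified machinery presupposes a central (and in particular linear) character situation, which is the opposite extreme of $\ce{M}{N}\leqslant N$. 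Your claim that $\ce{M}{N}\leqslant N$ together with minimal normality ``forces $gN$ off the radical'' is therefore false here. Consequently the proof stands or falls with the unproved regular-orbit/avoidance statement, and as written the attempt is incomplete.
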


In 2017, Bianchi, Brough, Camina and Pacifici obtained the subsequent result.

\begin{lemma}\emph{\cite[Corollary 4.4]{BBCP}}
\label{contradiction_lemma2}
Let $G$ be a group, and $K$ an abelian minimal normal subgroup of $G$. Let $M/N$ be a chief factor of $G$ such that $(\abs{K}, \abs{M/N})=1$, and $N = \ce{M}{K}$. Then every element of $M\smallsetminus N$ is a vanishing element of $G$.
\end{lemma}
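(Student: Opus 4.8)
The plan is to first locate $K$ within the chief factor, then use coprime duality to produce an irreducible character of $K$ that is moved by the given element, and finally build an irreducible character of $G$ vanishing on it by Clifford theory. I would begin with the structural reduction. Since $K$ is minimal normal and $M\lhd G$, the normal subgroup $K\cap M$ equals $1$ or $K$. If $K\cap M=1$ then $[K,M]\leqslant K\cap M=1$, so $M\leqslant \ce{G}{K}$ and hence $N=\ce{M}{K}=M$, contradicting that $M/N$ is a non-trivial chief factor. Thus $K\leqslant M$, and as $K$ is abelian we get $K\leqslant \ce{M}{K}=N$, so $K\leqslant N\leqslant M$. Because $K$ is an elementary abelian $p$-group with $p\nmid\abs{M/N}$, the factor $M/N=M/\ce{M}{K}$ is a $p'$-group acting faithfully and coprimely on $K$.

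Next, fix $g\in M\smallsetminus N$ and set $\bar g=gN\neq 1$. By Brauer's permutation lemma the coprime actions of $M/N$ on $K$ and on $\op{Irr}(K)$ have, element by element, the same number of fixed points; in particular the action on $\op{Irr}(K)$ is again faithful, so $\bar g$ moves some $\lambda\in\op{Irr}(K)$, i.e.\ $\lambda^{g}\neq\lambda$. Observe that every element of $N$ centralises $K$ and hence fixes every linear character of $K$, so $N\leqslant T:=I_{G}(\lambda)$ while $g\notin T$. The natural candidates for a vanishing character are then the irreducible constituents of $\lambda^{G}$: taking the Clifford correspondent $\psi\in\op{Irr}(T\mid\lambda)$ and $\chi=\psi^{G}\in\op{Irr}(G)$, the induced-character formula writes $\chi(g)$ as a sum of values of $\psi$ over those $G$-conjugates of $\lambda$ that are fixed by $g$.

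The hard part will be forcing this sum to vanish. If $M/N$ is abelian one is essentially in the situation of Lemma \ref{contradiction_lemma} (applied with $K$ playing the role of its minimal normal subgroup), and the argument there yields the zero directly. The genuinely new difficulty is that here $M/N$ may be nonabelian, so a single $G$-orbit on $\op{Irr}(K)$ can contain characters both moved and fixed by $\bar g$, and one cannot simply select an orbit entirely avoided by $\bar g$. To handle this I would exploit the coprime decomposition $K=\ce{K}{\bar g}\times[K,\bar g]$ with $[K,\bar g]\neq 1$: choosing $\lambda$ and extending it suitably (coprimality guarantees the extension), the surviving summands in $\chi(g)$ reorganise into sums of the nontrivial linear character $\lambda$ over a nontrivial coset of $K$, which vanish. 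This character-sum cancellation, an INW-type fixed-point computation on $\op{Irr}(K)$ adapted to a nonabelian chief factor, is the crux of the proof and the one point where the hypothesis $(\abs{K},\abs{M/N})=1$ is indispensable.
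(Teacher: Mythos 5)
This lemma is not proved in the paper at all: it is imported verbatim from \cite[Corollary 4.4]{BBCP}, so your attempt has to be measured against the argument in that source (which generalises \cite[Lemma 2.9]{DPSS}, i.e.\ Lemma \ref{contradiction_lemma}, by dropping minimality of the bottom term and abelianity of the top). Your opening reduction is correct: $K\leqslant N\leqslant M$, the chief factor $M/N=M/\ce{M}{K}$ acts faithfully and coprimely on $K$, Brauer's permutation lemma gives, for each $g\in M\smallsetminus N$, some $\lambda\in\op{Irr}(K)$ with $\lambda^g\neq\lambda$, and $N\leqslant I_G(\lambda)$ while $g\notin I_G(\lambda)$. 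But from there the proposal has a genuine gap, exactly at the point you yourself flag as the crux. First, the abelian case does not reduce to Lemma \ref{contradiction_lemma}: that lemma needs its bottom term to be minimal normal in $G$ with $\ce{M}{N}\leqslant N$, whereas here $\ce{M}{K}=N$ may properly contain $K$ (and $M/K$ need not be abelian), so $K$ cannot ``play the role of'' the minimal normal subgroup there. Second, the nonabelian case is settled only by an unverifiable gesture: for $g\notin K$ the induction formula expresses $\chi(g)$ through values $\psi(xgx^{-1})$ of the Clifford correspondent at elements \emph{outside} $K$, and these are governed by the (generally projective) character theory of $I_G(\lambda)/K$, not by sums of the linear character $\lambda$ over cosets of $K$; no such ``reorganisation'' is forced. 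Even the parenthetical ``coprimality guarantees the extension'' is unfounded: only $\abs{M/N}$ is coprime to $\abs{K}$, and $g$ itself may well have order divisible by the prime of $K$.

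What actually closes the argument in \cite{BBCP} is a \emph{uniform}, not per-element, choice of $\lambda$: one proves that the chief factor $M/N$ has a regular orbit on $\op{Irr}(K)$, i.e.\ there exists $\lambda\in\op{Irr}(K)$ with $I_G(\lambda)\cap M=N$. Granted this, no cancellation computation is needed: take $\psi\in\op{Irr}(I_G(\lambda))$ lying over $\lambda$ and $\chi=\psi^G\in\op{Irr}(G)$; for any $g\in M\smallsetminus N$ every $G$-conjugate of $g$ lies in the normal subgroup $M$, hence outside $I_G(\lambda)$ (otherwise it would lie in $I_G(\lambda)\cap M=N$, forcing $g\in N$), so the induction formula for $\chi(g)$ is an empty sum and $\chi(g)=0$ simultaneously for \emph{all} such $g$. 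Your per-element $\lambda$ with $\lambda^g\neq\lambda$ is strictly weaker: some $G$-conjugate of $g$ may still stabilise $\lambda$, which is precisely the obstruction you identified but did not overcome. The existence of the regular orbit is the real content of the lemma: for $M/N$ elementary abelian it is the classical regular-orbit theorem for abelian groups acting faithfully on completely reducible coprime modules, while for $M/N$ a direct product of nonabelian simple groups it requires the dedicated analysis in \cite{BBCP}, and nothing in your sketch substitutes for it.
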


Let $p$ be a prime, and $\chi\in\op{Irr}(G)$. Recall that $\chi$ is of $p$\emph{-defect zero} if $p$ does not divide $\frac{\abs{G}}{\chi(1)}$. A well-known result of Brauer \cite[Theorem 8.17]{I} highlights the significance that this property has for vanishing elements: if $\chi$ is an irreducible character of $p$-defect zero of $G$ then, for every $g\in G$ such that $p$ divides the order of $g$, it holds $\chi(g)=0$. The following lemma yields elements of normal subgroups that vanish in the whole group.

\begin{lemma}\emph{\cite[Lemma 2.2]{B}}
\label{Normaldefect}
Let $N$ be a normal subgroup of a group $G$. If $N$ has an irreducible character of $p$-defect zero, then every element of $N$ of order divisible by $p$ is a vanishing element in $G$.
\end{lemma}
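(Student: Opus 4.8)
The plan is to exhibit explicitly an irreducible character of $G$ vanishing on a given element, by combining the quoted theorem of Brauer with Clifford theory. Let $\theta\in\op{Irr}(N)$ be an irreducible character of $p$-defect zero, and fix $g\in N$ of order divisible by $p$. By Brauer's theorem (\cite[Theorem 8.17]{I}, recalled above), $\theta$ itself already satisfies $\theta(g)=0$, since $\theta$ has $p$-defect zero and $p$ divides $o(g)$; the whole point is therefore to transfer this vanishing from $N$ up to the whole group $G$.

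A tempting first attempt is to induce $\theta$ to $G$: because $N\lhd G$, every conjugate $x^{-1}gx$ (with $x\in G$) lies in $N$ and still has order divisible by $p$, so $\theta^G(g)=\frac{1}{\abs{N}}\sum_{x\in G}\theta(x^{-1}gx)=0$. However, $\theta^G$ is in general reducible, and the vanishing of a sum $\sum_i a_i\chi_i(g)$ of its irreducible constituents does not force any individual $\chi_i(g)$ to vanish. I expect this to be the only genuine obstacle, and it is circumvented by computing on a single irreducible constituent instead of on the whole induced character.

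Concretely, I would pick any $\chi\in\op{Irr}(G)$ lying over $\theta$ and evaluate it at $g$ through Clifford's theorem, writing $\chi_N=e(\theta_1+\cdots+\theta_t)$, where $\theta=\theta_1,\dots,\theta_t$ are the distinct $G$-conjugates of $\theta$ and $e$ is a positive integer. The decisive remark is that $p$-defect zero is invariant under $G$-conjugation: each $\theta_i$ is obtained from $\theta$ by an automorphism of $N$ induced by some $x\in G$, so $\theta_i\in\op{Irr}(N)$ with $\theta_i(1)=\theta(1)$, and hence $p\nmid\abs{N}/\theta_i(1)$ as well. Applying Brauer's theorem to each $\theta_i$ therefore gives $\theta_i(g)=0$ for every $i$, and consequently $\chi(g)=\chi_N(g)=e\sum_{i=1}^{t}\theta_i(g)=0$. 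Thus $g$ is a zero of $\chi$, i.e. $g$ is vanishing in $G$, as required. The only steps needing care are this conjugation-invariance of $p$-defect zero and the observation that the conjugates of $g$ stay inside $N$ and keep order divisible by $p$, both immediate from the normality of $N$.
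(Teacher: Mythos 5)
Your proof is correct and is essentially the argument the paper relies on (the lemma is quoted from \cite[Lemma 2.2]{B}, whose proof is exactly this): take $\chi\in\op{Irr}(G)$ over $\theta$, note via Clifford's theorem that $\chi_N=e(\theta_1+\cdots+\theta_t)$ with each $G$-conjugate $\theta_i$ again of $p$-defect zero, and apply Brauer's theorem \cite[Theorem 8.17]{I} to each $\theta_i$ to conclude $\chi(g)=0$. Your identification of the key points --- conjugation-invariance of $p$-defect zero and working with a single irreducible constituent rather than the (possibly reducible) induced character --- is exactly right.
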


We now focus on vanishing elements in simple groups. The combination of some results in \cite{DPSS}, which use the classification, gives the following.

\begin{proposition}
\label{simplegroups}
Let $S$ be a non-abelian simple group, and let $p\in\pi(S)$. Then, either there exists $\chi\in\op{Irr}(S)$ such that $\chi$ is of $p$-defect zero, or there exists a $p$-element $x\in S$ and $\chi\in\op{Irr}(S)$ such that $\chi$ extends to $\op{Aut}(S)$ and $\chi$ vanishes on $x$.
\end{proposition}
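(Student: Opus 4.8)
The plan is to read the statement off the classification of finite simple groups, combining the relevant results of \cite{DPSS}; the natural case division is governed by whether or not $S$ admits an irreducible character of $p$-defect zero.

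First I would dispose of all the groups for which the first alternative holds. If $S$ is of Lie type in defining characteristic $p$, the Steinberg character $\op{St}$ has degree $\abs{S}_p$, so that $\abs{S}/\op{St}(1)=\abs{S}_{p'}$ is coprime to $p$; hence $\op{St}$ is of $p$-defect zero and the first alternative is immediate. For the remaining pairs $(S,p)$ one invokes the theorem of Granville and Ono on defect-zero blocks (as recorded in \cite{DPSS}): every non-abelian simple group has an irreducible character of $p$-defect zero whenever $p\geq 5$, and the same holds for all but an explicitly determined list of pairs when $p\in\{2,3\}$. After this step only a short, concrete collection of pairs $(S,p)$ with $p\in\{2,3\}$ survives --- certain alternating groups, a few sporadic groups, and some groups of Lie type in cross characteristic $p$ --- and for these no $p$-defect zero character exists, so one must produce the data required by the second alternative.

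For each surviving pair I would exhibit, following the analysis in \cite{DPSS}, a $p$-element $x\in S$ and a character $\chi\in\op{Irr}(S)$ with $\chi(x)=0$ that extends to $\op{Aut}(S)$. For the exceptional alternating groups $A_n$ (with $n\neq 6$, so that $\op{Aut}(A_n)=\op{Sym}(n)$) one takes $\chi=\chi_\lambda$ for a non-self-conjugate partition $\lambda$ of $n$: such a character is $\op{Sym}(n)$-invariant and extends to $\op{Sym}(n)$, and a suitable choice of $\lambda$ together with the Murnaghan--Nakayama rule forces $\chi_\lambda$ to vanish on a chosen $p$-element. The handful of sporadic exceptions are settled by direct inspection of their (known) character tables and of the action of $\op{Aut}(S)$, while the cross-characteristic Lie type exceptions are handled by selecting an $\op{Aut}(S)$-invariant irreducible character that extends and vanishes on a $p$-element.

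I expect the extendibility requirement in the second alternative to be the main obstacle. Locating a $p$-element and an irreducible character vanishing on it is comparatively cheap --- Burnside's theorem already guarantees a zero in every non-linear row --- but ensuring that the chosen $\chi$ is invariant under, and actually extends to, the full automorphism group is delicate, especially for the Lie type groups, where $\op{Aut}(S)$ is built from diagonal, field and graph automorphisms. This is exactly the point at which the CFSG-dependent extendibility results underlying \cite{DPSS} are indispensable; the content of the present proposition is then the clean packaging of these scattered statements into a single dichotomy tailored to producing vanishing $p$-elements that survive in any overgroup.
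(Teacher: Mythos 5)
Your overall strategy is the paper's: split according to whether $S$ has an irreducible character of $p$-defect zero, quoting the CFSG-based results of \cite{DPSS}, and for the surviving pairs produce a character that extends to $\op{Aut}(S)$ and vanishes on a $p$-element. Your handling of the two families that genuinely survive matches the paper's cited ingredients exactly: sporadic groups by inspection of character tables (\cite[Lemma 2.3]{DPSS}), and alternating groups $A_n$ with $n\geq 7$ via restrictions of $\chi_\lambda$ for non-self-conjugate partitions $\lambda$ together with the Murnaghan--Nakayama rule (\cite[Proposition 2.4]{DPSS}).

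The one genuine flaw is in your first reduction. The result recorded as \cite[Proposition 2.1]{DPSS} is stronger than what you quote: a simple group of Lie type has an irreducible character of $p$-defect zero for \emph{every} prime $p$, in defining and in cross characteristic alike (this is due to Michler and Willems; the Granville--Ono theorem you invoke is the $t$-core result that settles the \emph{alternating} groups for $p\geq 5$, not the Lie-type groups at $p\in\{2,3\}$). Consequently your list of survivors is wrong: no cross-characteristic Lie-type pairs remain, only alternating and sporadic groups with $p\in\{2,3\}$. This matters because your proposed treatment of the phantom Lie-type exceptions --- ``selecting an $\op{Aut}(S)$-invariant irreducible character that extends and vanishes on a $p$-element'' --- is a bare assertion with no mechanism behind it, and, as you yourself observe, extendibility to $\op{Aut}(S)$ is precisely the delicate point for Lie-type groups, whose outer automorphisms involve diagonal, field and graph pieces. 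As written, that step is an unfilled gap; it is rescued only by the accident that the case is empty once the correct defect-zero statement is cited. Replacing your second step by the full statement of \cite[Proposition 2.1]{DPSS} (which also absorbs your separate Steinberg-character observation, and covers $A_5\cong PSL(2,5)$ and $A_6\cong PSL(2,9)$, so your $n\neq 6$ caveat resolves itself) closes the proof exactly as in the paper.
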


\begin{proof}
If either $S$ is a group of Lie type or $p\geq 5$, then \cite[Proposition 2.1]{DPSS} applies and $S$ has an irreducible character of $p$-defect zero (note that this case includes the groups $A_5 \cong PSL(2, 5)$ and $A_6 \cong PSL(2, 9)$). Hence it remains to consider sporadic simple groups and alternating groups, and $p\in \{2, 3\}$. Firstly, in virtue of \cite[Lemma 2.3]{DPSS}, for a sporadic simple group $S$ there exists always an irreducible character which extends to $\op{Aut}(S)$ and it vanishes on a $p$-element. For alternating groups $A_n$ with $n \geq  7$, it is known by \cite[Proposition 2.4]{DPSS} that $A_n$ has two irreducible characters $\chi_2, \chi_3$ such that $\chi_2$ vanishes on a $2$-element and $\chi_3$ vanishes on an element of order $3$. Further, both $\chi_2$ and $\chi_3$ extend to $\op{Aut}(A_n)$. 
\end{proof}

\medskip

An argument included within the proof of \cite[Theorem A]{DPSS} provides the following proposition, which turns to be essential in the remainder of the section.

\begin{proposition}
\label{reduction_lemma}
Let $N$ be a non-abelian minimal normal subgroup of a finite group $G$, and let $p\in\pi(N)$. Then there exists a $p$-element in $N$ which is vanishing in $G$.
\end{proposition}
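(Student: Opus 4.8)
The plan is to reduce to the simple group case via Proposition \ref{simplegroups} and to split according to its two alternatives. Since $N$ is a non-abelian minimal normal subgroup of $G$, we may write $N=S_1\times\cdots\times S_k$ where each $S_i\cong S$ for a fixed non-abelian simple group $S$, and the conjugation action of $G$ permutes the factors transitively; here $p\in\pi(N)=\pi(S)$. Applying Proposition \ref{simplegroups} to $S$ and $p$ gives two cases, which I would treat separately.

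First suppose $S$ has an irreducible character $\chi$ of $p$-defect zero. Then the product character $\chi\times\cdots\times\chi\in\op{Irr}(N)$ has degree $\chi(1)^k$, and since $\frac{\abs{N}}{\chi(1)^k}=\left(\frac{\abs{S}}{\chi(1)}\right)^k$ is coprime to $p$, it is an irreducible character of $N$ of $p$-defect zero. As $N\lhd G$, Lemma \ref{Normaldefect} then shows that every element of $N$ of order divisible by $p$ is vanishing in $G$; taking any element of order $p$ in $N$ (which exists as $p\in\pi(N)$) finishes this case.

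Now suppose instead that there is a $p$-element $x\in S$ and $\chi\in\op{Irr}(S)$ with $\chi(x)=0$ such that $\chi$ extends to $\op{Aut}(S)$. Set $\psi=\chi\times\cdots\times\chi\in\op{Irr}(N)$ and $\tilde{x}=(x,1,\ldots,1)\in N$, a $p$-element; note that $\psi(\tilde{x})=\chi(x)\chi(1)^{k-1}=0$, so it suffices to produce an irreducible character of $G$ restricting to $\psi$. Since $\ze{N}=1$, the kernel of the conjugation homomorphism $\rho\colon G\to\op{Aut}(N)$ is $C=\ce{G}{N}$ with $N\cap C=1$, and $\op{Aut}(N)\cong\op{Aut}(S)\wr\op{Sym}(k)$. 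The key step is that, because $\chi$ extends to $\op{Aut}(S)$, the character $\psi$ extends to $\op{Aut}(N)$: if $V$ affords an extension of $\chi$ to $\op{Aut}(S)$, then the tensor power $V^{\otimes k}$ is naturally an $\op{Aut}(S)\wr\op{Sym}(k)$-module (with $\op{Sym}(k)$ permuting the factors) whose restriction to $\op{Inn}(N)\cong N$ affords $\psi$. Denote by $\Psi\in\op{Irr}(\op{Aut}(N))$ its character; then $\Psi\circ\rho$ is a character of $G$ whose restriction to $N$ equals $\psi$, because $\rho$ maps $N$ isomorphically onto $\op{Inn}(N)$.

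Finally I would extract an irreducible extension. The identity $(\Psi\circ\rho)|_N=\psi$ shows in particular that $\psi$ is $G$-invariant, so by Clifford theory each irreducible constituent $\eta$ of $\Psi\circ\rho$ satisfies $\eta|_N=e\,\psi$ for some integer $e\geq 1$; comparing multiplicities of $\psi$ on both sides of $(\Psi\circ\rho)|_N=\psi$ forces exactly one constituent $\eta\in\op{Irr}(G)$ with $\eta|_N=\psi$. Then $\eta(\tilde{x})=\psi(\tilde{x})=0$, so $\tilde{x}$ is a vanishing $p$-element of $N$ in $G$, as required. I expect the main obstacle to be the second case, and specifically the verification that extendibility of $\chi$ to $\op{Aut}(S)$ propagates through the wreath product $\op{Aut}(N)\cong\op{Aut}(S)\wr\op{Sym}(k)$ to yield an extension of $\psi$; the tensor-power module construction is the cleanest way to make this rigorous, while the passage from a (possibly reducible) extension over $G$ to an irreducible one is a routine Clifford-theoretic argument.
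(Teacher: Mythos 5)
Your proof is correct and follows essentially the same route as the paper's: the same decomposition $N=S_1\times\cdots\times S_k$ with $S_i\cong S$, the same dichotomy supplied by Proposition \ref{simplegroups}, and Lemma \ref{Normaldefect} applied to the product character in the $p$-defect zero case. The only divergence is that where the paper simply cites \cite[Proposition 2.2]{DPSS} to extend $\chi\times\cdots\times\chi$ from $N$ to $G$, you reprove that extension directly via the tensor-power module over $\op{Aut}(S)\wr\op{Sym}(k)$ pulled back along $G\to\op{Aut}(N)$ --- a valid, self-contained inlining of the cited result (and your final Clifford-theoretic extraction is then redundant, since a character of $G$ whose restriction to $N$ is irreducible is itself irreducible).
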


\begin{proof}
We have that $N=S_1\times \cdots \times S_k$, where each $S_i$ is isomorphic to a non-abelian simple group $S$ with $p$ dividing its order. If $S$ has a character $\theta$ of $p$-defect zero, then $\chi := \theta \times \cdots \times \theta \in \op{Irr}(N)$ and it is clear that $\chi$ is also of $p$-defect zero. Let $1\neq x_i\in S_i$ be a $p$-element. Then $1\neq x:=x_1\cdots x_k \in N$ is a $p$-element and Lemma \ref{Normaldefect} provides that $x$ is vanishing in $G$. 

Let $i\in\{1, \ldots, k\}$ and suppose that $S_i$ does not have a character of $p$-defect zero. By Proposition \ref{simplegroups}, there exists $\theta \in \op{Irr}(S_i)$ and a $p$-element $y_i\in S_i$ such that $\theta(y_i)=0$ (so $1\neq y_i$) and $\theta$ extends to $\op{Aut}(S_i)$. Thus $1\neq y:=y_1\cdots y_k\in N$ is a $p$-element, and by \cite[Proposition 2.2]{DPSS} it follows that $\chi:=\theta\times\cdots\times\theta\in\op{Irr}(N)$ extends to $G$. Moreover, $\chi(y)=0$, and the result is now established.
\end{proof}

\medskip

From now on we deal with (non-)vanishing elements in factorised groups. The next example gives insight into occurring phenomena.

\begin{example}
\label{example_van}
Let $G=\op{Sym}(4)\times \langle x \rangle = AB$ be the factorised group as in Example \ref{tcc-core}. Note that although $((3, 4), x)$ is vanishing in $A$ and $((3, 4), 1)$ is vanishing in $B$, the product $((3, 4), x)((3, 4), 1) = (1, x)\in\ze{G}$ and so it is non-vanishing in $G$. On the other hand, $((2, 3, 4), 1)$ is a non-vanishing element in $B$ which is vanishing in $G$.
\end{example}

\begin{remark}
\label{remarkhyp}
We claim that the hypotheses regarding vanishing elements of the results stated from now on are inherited by every non-trivial quotient of a group $G$, where $G=AB$ is a core-factorisation. Indeed, let $N$ be a proper normal subgroup of $G$. Note that $G/N=(AN/N)(BN/N)$ is also a core-factorisation by Lemma \ref{lemacore}. Since there exists a bijection between $\op{Irr}(G)$ and the  set of all characters in $\op{Irr}(G/N)$ containing $N$ in their kernel, if $xN\in AN/N\cup BN/N$ is a vanishing (prime power order) element of $G/N$, then we can assume $x\in A\cup B$, and that $x$ is also a vanishing (prime power order) element of $G$. This fact will be used in the sequel, sometimes with no reference.
\end{remark}

Our first significant result analyses core-factorisations with no vanishing $p$-elements in the factors. We remark that the CFSG is needed.

\begin{theorem}
\label{SylowNormal}
Let $G=AB$ be a core-factorisation, and let $p$ be a prime. If every $p$-element in $A\cup B$ is non-vanishing in $G$, then $G$ has a normal Sylow $p$-subgroup.
\end{theorem}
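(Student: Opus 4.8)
The plan is to argue by induction on $\abs{G}$. Since the hypothesis is inherited by every proper quotient (Remark \ref{remarkhyp}, $G/M$ being again a core-factorisation by Lemma \ref{lemacore}), I first reduce to the case $\rad{p}{G}=1$: if $\rad{p}{G}\neq 1$, then by induction $G/\rad{p}{G}$ has a normal Sylow $p$-subgroup $W/\rad{p}{G}$, and its preimage $W$ is a normal $p$-subgroup of $G$ of the right order, hence a normal Sylow $p$-subgroup, and we are done. So assume $\rad{p}{G}=1$, the goal becoming $p\nmid\abs{G}$. Using that $G=AB$ is a core-factorisation, fix a minimal normal subgroup $N$ of $G$ with, say, $N\leqslant A$. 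If $N$ were a $p$-group it would lie in $\rad{p}{G}=1$; and if $N$ were non-abelian with $p\mid\abs{N}$, then Proposition \ref{reduction_lemma} would give a vanishing $p$-element inside $N\leqslant A$, against the hypothesis. Hence $p\nmid\abs{N}$.

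By induction $K/N:=\rad{p}{G/N}$ is the normal Sylow $p$-subgroup of $G/N$; choosing $P\in\syl{p}{K}$ gives $K=NP=N\rtimes P$, $K\lhd G$ and, since $p\nmid\abs{N}$, $P\in\syl{p}{G}$. Now I would apply Lemma \ref{core_charac} to build a chief series of $G$ passing through both $N$ and $K$, with every factor covered by $A$ or by $B$, obtaining a portion $N=L_0\lhd L_1\lhd\cdots\lhd L_r=K$ in which each $L_i/L_{i-1}$ is a $G$-chief factor of the $p$-group $K/N$ (so an elementary abelian $p$-group) covered by $A$ or $B$.

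The heart of the matter is to produce a vanishing $p$-element of $A\cup B$, which would contradict the hypothesis, unless $P$ is already forced to be normal. Consider $[K,N]$. If $N$ is abelian and $[K,N]=1$, then $K=N\times P$, so $P=\rad{p}{K}$ is characteristic in $K\lhd G$ and $P\lhd G$; thus $P\leqslant\rad{p}{G}=1$ and $p\nmid\abs{G}$, as wanted. Otherwise let $i$ be least with $L_i$ not centralising $N$; then $L_{i-1}$ centralises $N$, and since $\ce{L_i}{N}\lhd G$ satisfies $L_{i-1}\leqslant\ce{L_i}{N}<L_i$ across the chief factor $L_i/L_{i-1}$, we get $\ce{L_i}{N}=L_{i-1}$. (When $N$ is non-abelian it is cleaner: $\ce{K}{N}\cap N=\ze{N}=1$ makes $\ce{K}{N}$ a normal $p$-subgroup of $G$, so $\ce{K}{N}\leqslant\rad{p}{G}=1$, whence $\ce{L_1}{N}=1\leqslant N$ and one takes $i=1$.) In either case, since $(\abs{N},\abs{L_i/L_{i-1}})=1$, Lemma \ref{contradiction_lemma2} (for $N$ abelian) or Lemma \ref{contradiction_lemma} (for $N$ non-abelian, as then $\ce{L_1}{N}\leqslant N$) applies and yields that every element of $L_i\smallsetminus L_{i-1}$ is vanishing in $G$.

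It remains to exhibit such a vanishing element actually inside $A\cup B$, and this localisation is the step I expect to be the main obstacle; it is exactly where the factorisation-adapted series of Lemma \ref{core_charac} is decisive. Since $L_i/L_{i-1}$ is covered by, say, $A$, Dedekind's law gives $L_i=L_{i-1}(L_i\cap A)$, and because $L_i/L_{i-1}$ is a $p$-group, a Sylow $p$-subgroup $Q$ of $L_i\cap A$ maps onto $L_i/L_{i-1}$; hence $QL_{i-1}=L_i$, so $Q\nleqslant L_{i-1}$, and any $x\in Q\smallsetminus L_{i-1}$ is a non-trivial $p$-element lying in $A$ with $x\in L_i\smallsetminus L_{i-1}$. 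By the previous paragraph $x$ is vanishing in $G$, contradicting the hypothesis and completing the induction. The delicate points to get right are the compatibility between the chief series of Lemma \ref{core_charac} and the requirement that the relevant factor be a $p$-chief factor sitting inside $\rad{p}{G/N}$, together with the bookkeeping that guarantees $\ce{L_i}{N}=L_{i-1}$ so that the hypotheses of Lemmas \ref{contradiction_lemma} and \ref{contradiction_lemma2} are genuinely satisfied.
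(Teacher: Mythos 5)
Your overall strategy is sound and close in spirit to the paper's (minimal counterexample with $\rad{p}{G}=1$, a minimal normal $p'$-subgroup $N\leqslant A$, induction giving a normal Sylow $p$-subgroup of $G/N$, and Lemmas \ref{contradiction_lemma}/\ref{contradiction_lemma2} to manufacture vanishing $p$-elements), and your final localisation step is correct as far as it goes. But there is one genuine gap, and it is not the one you flagged: it is the claim that Lemma \ref{core_charac} lets you ``build a chief series of $G$ passing through both $N$ and $K$'' with every factor covered by $A$ or $B$. Lemma \ref{core_charac} only asserts the \emph{existence} of some covered chief series; its proof constructs one bottom-up by repeatedly choosing a covered normal subgroup in the current quotient, with no control over hitting a prescribed normal subgroup such as $K$, the preimage of $\rad{p}{G/N}$. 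Indeed, routing a covered chief series through a given normal subgroup is impossible in general: take $G=\langle a_1\rangle\times\langle a_2\rangle\cong C_p\times C_p$ with $A=\langle a_1\rangle$ and $B=\langle a_1a_2\rangle$. This is a core-factorisation, but every covered chief series has its order-$p$ term equal to $A$ or $B$, so none passes through $K=\langle a_2\rangle$. Your configuration is more special, but you give no argument exploiting that, and the gap infects both branches of your case analysis: even in the non-abelian case, where you only need $L_1$, the covered minimal normal subgroup of $G/N$ supplied by the core-factorisation could be a $p'$-group lying outside $\rad{p}{G/N}$, so the required covered $p$-chief factor directly above $N$ need not exist.

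The paper circumvents this as follows: it takes any covered chief series with $N_1=N$, lets $j$ be \emph{minimal} with $p$ dividing $\abs{N_j/N_{j-1}}$ (allowing covered $p'$-factors to interleave), and observes $N_j/N=N_{j-1}/N\times P_0/N$ where $P_0/N$ is the normal Sylow $p$-subgroup of $N_j/N$; it then applies Lemma \ref{contradiction_lemma} to $P_0$ (using $\ce{P_0}{N}\leqslant N$ via $\rad{p}{P_0}\leqslant\rad{p}{G}=1$, which sidesteps your centraliser bookkeeping $\ce{L_i}{N}=L_{i-1}$ along intermediate terms), and finally invokes prefactorisation of $N_j$ together with \cite[Lemma 2]{FMOprime} to find a $p$-element of $A\cup B$ in $P_0\smallsetminus N$. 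Your Sylow-localisation trick is correct and would actually mesh with this choice: since $N_j/N_{j-1}$ is covered by, say, $A$, a Sylow $p$-subgroup of $N_j\cap A$ maps onto $N_j/N_{j-1}$, and any $p$-element $x\in(N_j\cap A)\smallsetminus N_{j-1}$ automatically lies in $P_0\smallsetminus N$ (its image in $N_j/N$ is a $p$-element, hence lies in the normal Sylow subgroup $P_0/N$). Substituting this for your through-$K$ series would close the gap, and would even avoid the appeal to \cite[Lemma 2]{FMOprime}; as written, however, the proof rests on an unproven (and, for general core-factorisations, false) strengthening of Lemma \ref{core_charac}.
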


\begin{proof}
Let $G$ be a counterexample of minimal order to the result, and take $P\in\syl{p}{G}$. Clearly we can assume that $\rad{p}{G}$ is proper in $G$. Hence by Remark \ref{remarkhyp} and the minimality of $G$ we may suppose $\rad{p}{G}=1$. Since $G=AB$ is a core-factorisation, we can consider a minimal normal subgroup $N$ of $G$ such that $N\leqslant A$, for instance. Let suppose that $p$ divides its order. Then $N$ is non-abelian, and by Proposition \ref{reduction_lemma} there is a $p$-element $x\in N$ which is vanishing in $G$, a contradiction. So $p$ does not divide the order of $N$. In particular, we may assume that $N$ is proper in $G$. By minimality and Remark \ref{remarkhyp} we obtain that $PN/N$ is normal in $G/N$, and then $G$ is $p$-separable. 

We can choose by Lemma \ref{core_charac} a chief series $1=N_0 \unlhd N_1=N \unlhd \cdots \unlhd N_{n-1} \unlhd N_n=G$ such that each chief factor $N_i/N_{i-1}$ is covered by either $A$ or $B$. Let $j\in\{2, \ldots, n\}$ be the minimum number such that $p$ divides $\abs{N_j/N_{j-1}}$. Then $N_j/N_{j-1}$ is a minimal normal subgroup of $G/N_{j-1}$ and it is $p$-elementary abelian. It follows that $N_j/N = N_{j-1}/N \times P_0/N$, where $1\neq P_0/N=PN/N\cap N_j/N$ is the unique Sylow $p$-subgroup (and elementary abelian) of $N_j/N$. We claim that every element of $P_0\smallsetminus N$ is vanishing in $G$. Note that $P_0/N$ is abelian and normal in $G/N$. It also holds $(\abs{N}, \abs{P_0/N})=1$. In addition, since $N=\rad{p'}{P_0}$ and $\rad{p}{P_0}\leqslant\rad{p}{G}=1$, then $\ce{P_0}{N}\leqslant N$. Lemma \ref{contradiction_lemma} yields that every element in $P_0\smallsetminus N$ is vanishing in $G$. Therefore, it remains to find a $p$-element in $P_0\smallsetminus N$ lying in either $A$ or $B$ in order to get the final contradiction. 

Since $N_j=(N_j\cap A)(N_j \cap B)$ by Lemma \ref{core_charac}, applying \cite[Lemma 2]{FMOprime} we can affirm that the unique Sylow $p$-subgroup $P_0/N$ of $N_j/N$ is also prefactorised, that is, $P_0/N=(P_0/N \cap (N_j\cap A)/N)(P_0/N \cap (N_j\cap B)N/N)$. Let $X\in \{A, B\}$ such that $1\neq P_0/N\cap (N_j\cap X)N/N=(P_0 \cap N_j \cap X)N/N$. If we pick a $p$-element $1\neq x \in (P_0 \cap N_j \cap X) \smallsetminus N$, then $x$ is vanishing in $G$. Hence the result is established.
\end{proof}

\medskip

As an immediate consequence, when we take the trivial factorisation $G=A=B$ in the above theorem, we obtain \cite[Theorem A]{DPSS}. In their proof, the authors apply Lemma \ref{contradiction_lemma} to the centre of a Sylow subgroup in order to get the final contradiction. We highlight that the centre subgroup may not be prefactorised (see \cite[Example 4.1.43]{BEA}) and so our reasonings differ.

Another consequence of Theorem \ref{SylowNormal} is the following.

\begin{corollary}
\label{corollaryHall}
Let $G=AB$ be a core-factorisation, and let $\sigma$ be a set of primes. If every $\sigma$-element of prime power order in $A\cup B$ is non-vanishing in $G$, then $G$ has a nilpotent normal Hall $\sigma$-subgroup.
\end{corollary}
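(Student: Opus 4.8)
The plan is to deduce the corollary from Theorem~\ref{SylowNormal} by applying that theorem separately to each prime in $\sigma$ and then amalgamating the resulting normal Sylow subgroups into a single normal Hall subgroup. First I would fix a prime $p\in\sigma\cap\pi(G)$ and observe that every $p$-element of $A\cup B$ is, in particular, a $\sigma$-element of prime power order; hence by hypothesis it is non-vanishing in $G$. This is exactly the hypothesis of Theorem~\ref{SylowNormal} for this $p$, so we obtain a normal Sylow $p$-subgroup $P_p\unlhd G$, which (being normal) is the unique element of $\syl{p}{G}$. For primes $p\in\sigma\smallsetminus\pi(G)$ this contributes only the trivial subgroup, so no separate treatment is needed.

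Next I would assemble these subgroups. Writing $\sigma\cap\pi(G)=\{p_1,\ldots,p_r\}$ and setting $H=P_{p_1}\cdots P_{p_r}$, I would use that a product of normal subgroups is normal, so $H\unlhd G$; and since the factors $P_{p_i}$ have pairwise coprime orders, the product is in fact the internal direct product $H=P_{p_1}\times\cdots\times P_{p_r}$. A direct product of $p$-groups over distinct primes is nilpotent, so $H$ is nilpotent, and $\abs{H}$ equals the $\sigma$-part of $\abs{G}$, whence $H\in\hall{\sigma}{G}$. This produces the asserted nilpotent normal Hall $\sigma$-subgroup.

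I do not anticipate a genuine obstacle, since all the substantive content is carried by Theorem~\ref{SylowNormal}; the remaining argument relies only on the elementary facts that a product of normal subgroups is normal and that pairwise coprime normal subgroups generate their direct product. The sole point meriting a moment's care is verifying that the normal Sylow subgroups for the various primes of $\sigma$ together account for exactly the $\sigma$-part of $\abs{G}$, so that $H$ is genuinely a Hall $\sigma$-subgroup and not merely a normal nilpotent $\sigma$-subgroup.
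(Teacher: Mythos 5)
Your proposal is correct and is exactly the paper's argument: the paper's proof reads, in its entirety, ``Apply Theorem~\ref{SylowNormal} for each prime in $\sigma$,'' and you have simply made explicit the routine amalgamation step (normal Sylow subgroups for the primes of $\sigma$ have pairwise coprime orders, so their product is a nilpotent normal Hall $\sigma$-subgroup) that the paper leaves to the reader.
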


\begin{proof}
Apply Theorem \ref{SylowNormal} for each prime in $\sigma$.
\end{proof}

\medskip

Note that if $\sigma=p'$ in the above result, then it generalises \cite[Corollary B]{DPSS}. Indeed, the next corollary extends \cite[Corollary C]{DPSS} for factorised groups.

\begin{corollary}
Let $G=AB$ be a core-factorisation, and let $\{ p, q\}\subseteq\pi(G)$. If every element in $A\cup B$ vanishing in $G$ has order a $\{p, q\}$-number, then $G$ is soluble.
\end{corollary}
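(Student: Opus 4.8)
The plan is to derive this solubility statement directly from Corollary \ref{corollaryHall} together with Burnside's classical $p^aq^b$-theorem; the genuine group-theoretic effort has already been carried out in the previous results (ultimately in Theorem \ref{SylowNormal}, which relies on the CFSG), so here the task is only a clean reduction. I would begin by setting $\sigma := \pi(G)\smallsetminus\{p,q\}$, the set of prime divisors of $\abs{G}$ other than $p$ and $q$, and translating the hypothesis into the language required by Corollary \ref{corollaryHall}. The key observation is that if $1\neq g\in A\cup B$ is an $r$-element for some prime $r\in\sigma$, then $r\notin\{p,q\}$ divides $o(g)$, so $o(g)$ fails to be a $\{p,q\}$-number; by assumption $g$ is therefore non-vanishing in $G$. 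In other words, every $\sigma$-element of prime power order lying in $A\cup B$ is non-vanishing in $G$, which is exactly the hypothesis of Corollary \ref{corollaryHall} for the set $\sigma$.

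Next I would apply Corollary \ref{corollaryHall} to $\sigma$, obtaining a nilpotent normal Hall $\sigma$-subgroup $H\in\hall{\sigma}{G}$. Solubility of $G$ then follows from an extension argument: $H$ is soluble because it is nilpotent, while the quotient $G/H$ has order a $\{p,q\}$-number (as $H$ is a Hall $\sigma$-subgroup and $\sigma=\pi(G)\smallsetminus\{p,q\}$), so $G/H$ is a $\{p,q\}$-group and hence soluble by Burnside's theorem. Since both $H$ and $G/H$ are soluble, so is $G$.

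I do not expect a serious obstacle in this argument; its whole weight rests on Corollary \ref{corollaryHall}. The only points deserving a moment's care are purely formal: one must record that the hypothesis really does rule out vanishing prime power order $\sigma$-elements in the factors (and not merely arbitrary vanishing elements), and one should note that the degenerate case $\sigma=\varnothing$ causes no trouble --- there $H=1$ and $G$ is already a $\{p,q\}$-group, so Burnside's theorem applies immediately. Finally I would remark, as the authors do, that taking the trivial factorisation $G=A=B$ recovers \cite[Corollary C]{DPSS}.
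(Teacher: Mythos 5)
Your proof is correct and follows essentially the same route as the paper: the authors also set $\sigma=\{p,q\}'$, invoke Corollary \ref{corollaryHall} to obtain a nilpotent normal Hall $\sigma$-subgroup, and conclude solubility from the quotient being a $\{p,q\}$-group (Burnside's $p^aq^b$-theorem). Your explicit verification of the hypothesis of Corollary \ref{corollaryHall} and the remark on the degenerate case $\sigma=\varnothing$ are details the paper leaves implicit, but there is no difference in substance.
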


\begin{proof}
We denote by $\sigma:=\{p, q\}'$. In virtue of Corollary \ref{corollaryHall}, $G$ has a nilpotent normal Hall $\sigma$-subgroup $N$. Now, $G/N$ is soluble because it is a $\{p, q\}$-group, so $G$ is also soluble. 
\end{proof}

\medskip

If we consider the case when the hypotheses in Theorem \ref{SylowNormal} hold for all primes, then it follows clearly that those groups are nilpotent. But actually we obtain the stronger fact that they are abelian. The next result is essential in its proof.

\begin{proposition}\emph{\cite[Theorem B]{INW}}
\label{INW_supersoluble}
If $G$ is supersoluble, then every element in $G\smallsetminus \ze{\fit{G}}$ is vanishing in $G$. In particular, if $G$ is nilpotent, then all elements in $G\smallsetminus \ze{G}$ are vanishing in $G$.
\end{proposition}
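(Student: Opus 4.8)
Since the parenthetical nilpotent assertion follows at once from the general one (if $G$ is nilpotent then $\fit{G}=G$, so $\ze{\fit{G}}=\ze{G}$), the plan is to prove that every non-vanishing element $g$ of a supersoluble group $G$ lies in $\ze{\fit{G}}$. I would first reformulate the goal: because the Fitting subgroup is self-centralising in soluble groups, $\ce{G}{\fit{G}}\leqslant\fit{G}$ and hence $\ze{\fit{G}}=\ce{G}{\fit{G}}$, so it is enough to show that a non-vanishing $g$ centralises $\fit{G}$. The recurring device will be the following \emph{central conjugacy trick}: if $1\neq z\in\ze{H}$ and $g$ is conjugate in $H$ to $gz$, then choosing $\chi\in\op{Irr}(H)$ with $z\notin\ker\chi$ (possible as $z\neq 1$) and using that the central $z$ acts on $\chi$ as a scalar $\omega\neq 1$, one gets $\chi(g)=\chi(gz)=\omega\chi(g)$, so $\chi(g)=0$ and $g$ vanishes in $H$.

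I would treat the nilpotent case first, as a model for the mechanism. Writing $G$ as the direct product of its Sylow subgroups and using that every $\chi\in\op{Irr}(G)$ has the form $\chi_1\times\cdots\times\chi_k$ with $\chi(g)=\prod_i\chi_i(g_i)$, one sees that $g$ is non-vanishing in $G$ exactly when each component $g_i$ is non-vanishing in the corresponding Sylow $p$-subgroup; since $\ze{G}=\ze{P_1}\times\cdots\times\ze{P_k}$, this reduces the claim to $p$-groups. For a $p$-group $P$ and $g\notin\ze{P}$ I would induct on $\abs{P}$: pick $\langle z\rangle\leqslant\ze{P}$ of order $p$; if $g\langle z\rangle\notin\ze{P/\langle z\rangle}$, the inductive hypothesis produces a character of $P/\langle z\rangle$ vanishing at $g\langle z\rangle$, which lifts to the required character of $P$; otherwise $[g,P]\leqslant\langle z\rangle$ while $[g,P]\neq 1$, so $[g,P]=\langle z\rangle$, some conjugate of $g$ equals $gz$, and the central conjugacy trick applies.

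For the supersoluble case I would argue by induction on $\abs{G}$ along a cyclic chief series, which exists precisely because $G$ is supersoluble. A minimal normal subgroup $N$ is then cyclic of prime order, and being a non-trivial normal subgroup of the nilpotent group $\fit{G}$ it satisfies $N\leqslant\ze{\fit{G}}$. Passing to the supersoluble quotient $G/N$, where $gN$ is still non-vanishing, induction gives $gN\in\ze{\fit{G/N}}$. The real difficulty is that this information can degenerate, since $\fit{G/N}$ may strictly contain $\fit{G}/N$: already for $G=\op{Sym}(3)$ and $N=\fit{G}$ the quotient is abelian and the inductive step is vacuous, so $g\in\ce{G}{\fit{G}}$ cannot be read off directly. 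To dispose of these residual configurations I would assume $g$ fails to centralise some $\rad{q}{G}$ and split according to the action of $g$ on the order-$q$ chief factors inside $\rad{q}{G}$: when $g$ acts non-trivially on such a factor the action runs through a $q'$-quotient, and I would invoke the coprime machinery of Lemmas \ref{contradiction_lemma} and \ref{contradiction_lemma2} to place $g$ as a vanishing element of the appropriate normal section; when $g$ acts trivially on all these factors yet still moves $\rad{q}{G}$, the obstruction is $q$-local and I would again manufacture a central twist $z$ with $g$ conjugate to $gz$ and apply the central conjugacy trick.

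The step I expect to be the main obstacle is exactly this last bookkeeping, and it is where supersolubility is indispensable: one must guarantee that the failure of $g$ to centralise $\fit{G}$ can always be localised to a single cyclic chief factor on which a vanishing irreducible character can be built, and that the witnessing twist $z$ can be taken genuinely central in the group (or in a suitable quotient) so that the central conjugacy trick is legitimate, rather than having the obstruction diffused across the series in a manner invisible to the quotient induction. Matching the two mechanisms — the coprime lemmas for chief factors of different characteristic and the central conjugacy trick for the same-characteristic part — against the possible positions of $g$ relative to the chief series is the delicate part of the argument.
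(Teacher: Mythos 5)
First, note the benchmark here is special: the paper gives no proof of Proposition \ref{INW_supersoluble} at all --- it is quoted from \cite[Theorem B]{INW} --- so your attempt has to be measured as a self-contained proof against the original Isaacs--Navarro--Wolf argument. Your nilpotent half is correct and complete: the reduction to Sylow subgroups via the product structure of $\op{Irr}(P_1\times\cdots\times P_k)$, the induction over a central subgroup $\langle z\rangle$ of order $p$, and the scalar trick $\chi(g)=\chi(gz)=\omega\chi(g)$ are all sound. The supersoluble half --- the actual content of the statement --- has a genuine gap, which you flag yourself: everything after ``to dispose of these residual configurations'' is a programme, not an argument, and it already fails on the Frobenius group $G=[A]B$ with $A$ cyclic of order $5$ and $B=\langle b\rangle$ cyclic of order $4$ acting faithfully (the paper's own example in Section \ref{sec_square}). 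Here $\fit{G}=A=\ze{\fit{G}}$, so the statement demands that $b$ be vanishing in $G$. Your induction is vacuous ($G/A$ is abelian), and $b$ acts nontrivially on the unique order-$5$ chief factor $A$, so your first mechanism should fire; but Lemmas \ref{contradiction_lemma} and \ref{contradiction_lemma2} require a chief factor $M/N$ with $b\in M\smallsetminus N$ and $N=\ce{M}{A}$, and since $\ce{G}{A}=A$ this forces $\abs{M/A}$ to be prime, whereas $bA$ has order $4$ in $G/A\cong C_4$: the lemmas certify that $b^2$ vanishes, never $b$. You cannot substitute $b^2$ for $b$, since nonvanishing is not inherited by powers, and your central conjugacy trick is unavailable because $\ze{G}=1$ (the conjugates of $b$ are exactly the elements $bz$ with $z\in A$, and no such $z$ is central). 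So the plan, as described, has no route to the required zero at $b$, and this configuration --- an element of composite order modulo the centraliser of a chief factor --- is precisely the ``diffused obstruction'' you worried about.

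The missing idea is Clifford-theoretic, and it is the engine of the actual proof in \cite{INW}. In the example above the zero at $b$ comes from an induced character: for $1\neq\lambda\in\op{Irr}(A)$ the character $\lambda^G$ is irreducible and vanishes off $A$. In general, if $A$ is an abelian normal subgroup and $\lambda\in\op{Irr}(A)$ has inertia group $T=I_G(\lambda)$, then every $\chi\in\op{Irr}(G)$ lying over $\lambda$ is induced from $T$ and hence vanishes off $\bigcup_{x\in G}T^{x}$; consequently a nonvanishing element must have a conjugate inside the inertia group of \emph{every} irreducible character of every abelian normal subgroup. Applied to elementary abelian sections of $\fit{G}$, where supersolubility (cyclic chief factors) gives enough control of the orbits of $G$ on $\op{Irr}(A)$, this is what pins a nonvanishing $g$ into $\ce{G}{\fit{G}}=\ze{\fit{G}}$. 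Your two mechanisms --- the coprime lemmas and the central twist --- produce zeros of a quite different origin, and neither detects induced-character zeros; without a tool of this kind the induction along a cyclic chief series cannot close.
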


\begin{corollary}
\label{propositionKEY}
Let $G=AB$ be a core-factorisation. The following statements are pairwise equivalent:
\begin{enumerate}
	\item[\emph{(1)}] Every element $x\in A\cup B$ is non-vanishing in $G$.
	
	\item[\emph{(2)}] Every prime power order element $x\in A\cup B$ is non-vanishing in $G$.
	
	\item[\emph{(3)}] $G$ is abelian.
\end{enumerate}
\end{corollary}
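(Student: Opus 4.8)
The plan is to establish the cycle of implications $(3)\Rightarrow(1)\Rightarrow(2)\Rightarrow(3)$, which yields the asserted pairwise equivalence. Two of these links are immediate. For $(3)\Rightarrow(1)$, if $G$ is abelian then by the classical result of Burnside quoted in the introduction $G$ possesses no vanishing elements at all, so in particular every element of $A\cup B$ is non-vanishing in $G$. The implication $(1)\Rightarrow(2)$ is trivial, since the prime power order elements of $A\cup B$ form a subset of the elements of $A\cup B$.

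The substance of the corollary therefore lies in $(2)\Rightarrow(3)$. First I would observe that the hypothesis in $(2)$ is precisely the hypothesis of Theorem \ref{SylowNormal} for every prime simultaneously: for each $p\in\pi(G)$, a $p$-element of $A\cup B$ is in particular a prime power order element of $A\cup B$, hence non-vanishing in $G$. Theorem \ref{SylowNormal} then provides a normal Sylow $p$-subgroup of $G$ for each prime $p$, and since a finite group all of whose Sylow subgroups are normal is nilpotent, we conclude that $G$ is nilpotent. (Equivalently, one may invoke Corollary \ref{corollaryHall} with $\sigma=\pi(G)$.)

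It remains to upgrade nilpotency to commutativity, and this is the key point. By Proposition \ref{INW_supersoluble}, in the nilpotent group $G$ every element of $G\smallsetminus\ze{G}$ is vanishing in $G$; thus under hypothesis $(2)$ no prime power order element of $A\cup B$ can lie outside $\ze{G}$, that is, all prime power order elements of $A\cup B$ are central in $G$. Now the subgroups $A$ and $B$ are each generated by their elements of prime power order (any element of a finite group is the product of its commuting primary components, which are powers of it and hence lie again in the same subgroup). Since $\ze{G}$ is a subgroup containing all prime power order elements of $A$, it contains the subgroup they generate, so $A\leqslant\ze{G}$; symmetrically $B\leqslant\ze{G}$. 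Hence $G=AB\leqslant\ze{G}$, giving $G=\ze{G}$, so $G$ is abelian, as required.

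The only genuine obstacle is this final step: one must translate the pointwise centrality of the prime power order elements into centrality of the whole factors $A$ and $B$, which is exactly why the observation that a subgroup is generated by its prime power order elements is needed in combination with Proposition \ref{INW_supersoluble}. Everything else reduces to a direct application of the machinery already assembled in this section, namely Theorem \ref{SylowNormal} to secure nilpotency, together with Burnside's theorem for the trivial direction.
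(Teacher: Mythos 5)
Your proposal is correct and follows essentially the same route as the paper: the cycle $(3)\Rightarrow(1)\Rightarrow(2)\Rightarrow(3)$ with the trivial directions dispatched first, then nilpotency of $G$ via Theorem \ref{SylowNormal} for each prime, and finally Proposition \ref{INW_supersoluble} to force every prime power order element of $A\cup B$ into $\ze{G}$, whence $A,B\leqslant\ze{G}$ and $G=AB$ is abelian. Your explicit remark that $A$ and $B$ are generated by their prime power order elements is exactly the step the paper phrases as ``every Sylow subgroup of $A$ and $B$ lies below $\ze{G}$.''
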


\begin{proof}
There is no doubt in the implications (1) $\Rightarrow$ (2) and (3) $\Rightarrow$ (1), so let prove (2) $\Rightarrow$ (3). Clearly, by Theorem \ref{SylowNormal}, $G$ is nilpotent. Since we are assuming that every prime power order element lying in $A\cup B$ is non-vanishing in $G$, then Proposition \ref{INW_supersoluble} provides that every Sylow subgroup of $A$ and $B$ lies below $\ze{G}$, and thus $G=AB \leqslant \ze{G}$. 
\end{proof}

\medskip

As it has been said before, from Burnside's result quoted in the introduction it is elementary to show that a group is abelian if and only if it has no vanishing elements. Indeed, it is enough to consider in this last characterisation only prime power order elements, as we directly deduce by taking the trivial factorisation in the previous corollary. This claim can be also obtained from \cite[Theorem B]{MNO}, which asserts that a non-linear complex character vanishes on a prime power order element (it also uses the CFSG). In any case, both proofs emphasize the difficulty of handling only prime power order elements. Moreover, observe that \cite[Theorem B]{MNO} does not imply directly Corollary \ref{propositionKEY}, since we cannot assure in a factorised group that a vanishing prime power order element lies in one of the factors.


\section{Prime power vanishing indices}
\label{sec_prime}

In \cite{CC}, Camina and Camina analysed the structure of the so-called $p$-Baer groups, i.e. groups all of whose $p$-elements have prime power indices for a given prime $p$. Next, in \cite{FMOprime} we extended this study through products of two arbitrary groups. Thus, as stated in the introduction, it seems natural to address the corresponding vanishing problem, i.e. vanishing indices which are prime powers, in particular for factorised groups.

Let enunciate first some preliminary results. The subsequent well-established one is due to Wielandt.

\begin{lemma}
\label{wielandt}
Let $G$ be a finite group and $p$ a prime. If $x\in G$ is a $p$-element and $i_G(x)$ is a $p$-number, then $x\in\op{O}_{p}(G)$.
\end{lemma}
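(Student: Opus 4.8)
The plan is to prove the contrapositive-free statement directly by exhibiting $x$ inside every Sylow $p$-subgroup of $G$, or equivalently by showing $x$ centralises a normal $p$-complement's worth of structure. Concretely, I would first set $P\in\syl{p}{G}$ with $x\in P$ (possible since $x$ is a $p$-element, after conjugating if necessary; but in fact we want the conclusion $x\in\rad{p}{G}$ to be conjugation-independent, so I keep $x$ fixed). The hypothesis $i_G(x)=\abs{G:\ce{G}{x}}$ is a $p$-number means that $\ce{G}{x}$ contains a full Hall $p'$-subgroup of $G$; that is, some $H\in\hall{p'}{G}$ satisfies $H\leqslant\ce{G}{x}$, so $x$ centralises $H$.

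The key step is then a counting/transfer argument. Since $x$ centralises a Hall $p'$-subgroup $H$, the element $x$ lies in $\ce{G}{H}$, and I would examine how the conjugates of $x$ distribute. The cleanest route is Wielandt's own: consider the normal closure $\langle x\rangle^{G}$, or better, use that $x$ commutes with $H$ to show every $G$-conjugate of $x$ also commutes with a Hall $p'$-subgroup, and then argue that the subgroup generated by all $p$-elements commuting with a fixed Hall $p'$-complement is a normal $p$-subgroup. Alternatively, I would invoke the standard fact that if a $p$-element has $p$-power index then its class $x^{G}$ consists of $p$-elements all lying in $\rad{p}{G}$, proved by induction on $\abs{G}$: pass to $G/\rad{p}{G}$, where the image $\bar x$ still has $p$-power index, and reduce to the case $\rad{p}{G}=1$, in which one must derive $x=1$.

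The main obstacle is precisely the base case $\rad{p}{G}=1$: here I must show that a $p$-element $x$ whose index is a $p$-number is forced to be trivial. The argument I would use is that $\ce{G}{x}\supseteq H$ for a Hall $p'$-subgroup $H$, so $H\leqslant\ce{G}{x}\leqslant\no{G}{\langle x\rangle^{\text{something}}}$; more usefully, the set of all $p$-elements of $\ce{G}{H}$ generates a subgroup normalised by $H$ and by a Sylow $p$-subgroup of $\ce{G}{x}$, hence by enough of $G$ to produce a non-trivial normal $p$-subgroup unless $x=1$. Making this normality precise is the delicate point, and it is exactly where Wielandt's original transfer/fusion argument (via the focal subgroup or Burnside's normal $p$-complement type reasoning applied in $G/\rad{p}{G}$) does the work.

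Since the statement is attributed to Wielandt and is entirely standard, in the write-up I would not reproduce the fusion-theoretic details but rather cite the classical source and sketch only the reduction to $\rad{p}{G}=1$ together with the observation that $\ce{G}{x}$ contains a Hall $p'$-subgroup. The technical heart—that centralising a $p'$-Hall subgroup while having $p$-power class size forces membership in $\rad{p}{G}$—is the step I expect to be genuinely non-trivial, and it is the one a careful proof must justify by a transfer argument rather than by elementary manipulation.
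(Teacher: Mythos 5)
Your first concrete deduction is false: the hypothesis that $i_G(x)$ is a $p$-number does \emph{not} give you some $H\in\hall{p'}{G}$ with $H\leqslant\ce{G}{x}$, because Hall $p'$-subgroups need not exist in a non-soluble group, i.e.\ $\hall{p'}{G}$ may be empty. For instance, take $G=A_5\times\langle x\rangle$ with $o(x)=2$ and $p=2$: then $i_G(x)=1$ is a $2$-number (and indeed $x\in\rad{2}{G}$), but a Hall $2'$-subgroup of $G$ would have order $15$, and $G$ has no such subgroup since it has no element of order $15$. What the hypothesis actually gives is that, for each prime $q\neq p$ \emph{separately}, $\ce{G}{x}$ contains a Sylow $q$-subgroup of $G$; these do not assemble into a single $p$-complement. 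Since your whole sketch (conjugates centralising Hall complements, the subgroup generated by the $p$-elements commuting with a fixed complement) is built on that possibly nonexistent $H$, and since you yourself flag the crucial normality step as the ``delicate point'' and ultimately propose to cite it away, the proposal does not contain a proof.

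Two further remarks. First, the paper itself offers no proof of Lemma \ref{wielandt} either: it is quoted as a well-established result of Wielandt, so citing the classical literature is consistent with the paper's treatment. Second, your closing assessment that the heart of the matter requires a transfer or fusion argument is also off the mark: the lemma has a short, completely elementary proof, with no induction and no difficult base case. Since $x\in\ze{\ce{G}{x}}$ is a $p$-element, $\langle x\rangle$ is a normal $p$-subgroup of $\ce{G}{x}$, so $x\in\rad{p}{\ce{G}{x}}\leqslant Q$ for every $Q\in\syl{p}{\ce{G}{x}}$. Fix such a $Q$ and choose $P\in\syl{p}{G}$ with $Q\leqslant P$. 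Then $\ce{P}{x}=P\cap\ce{G}{x}=Q$, because $P\cap\ce{G}{x}$ is a $p$-subgroup of $\ce{G}{x}$ containing the Sylow $p$-subgroup $Q$. Moreover, since $i_G(x)$ is a $p$-number, $\abs{P:Q}=\abs{G:\ce{G}{x}}$. Hence $\abs{x^P}=\abs{P:\ce{P}{x}}=\abs{G:\ce{G}{x}}=\abs{x^G}$, and as $x^P\subseteq x^G$ this forces $x^G=x^P\subseteq P$. Therefore $\langle x^G\rangle$ is a normal subgroup of $G$ contained in the $p$-group $P$, so $x\in\langle x^G\rangle\leqslant\rad{p}{G}$.
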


In \cite{CC}, Camina and Camina proved the next proposition, which extends both the above lemma and the celebrated Burnside's result about the non-simplicity of groups with a conjugacy class of prime power size.

\begin{proposition}\emph{\cite[Theorem 1]{CC}}
\label{CaminaCamina}
All elements of prime power index of a finite group $G$ lie in $\op{F}_2(G)$, the second term of the Fitting series of $G$.
\end{proposition}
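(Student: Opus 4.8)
The plan is to reduce, by the standard manipulation of $p$-parts and $p'$-parts, to a single prime, dispose of the $p$-part using Wielandt, and then isolate the genuinely hard part, which concerns $p'$-elements of prime power index.

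First I would fix the prime: suppose $i_G(x)=p^a$ for a prime $p$. Writing $x=x_px_{p'}$ as the commuting product of its $p$-part and its $p'$-part, both factors are powers of $x$, so $\ce{G}{x}\leqslant\ce{G}{x_p}$ and $\ce{G}{x}\leqslant\ce{G}{x_{p'}}$; hence $i_G(x_p)$ and $i_G(x_{p'})$ both divide $p^a$ and are $p$-powers. Since $\op{F}_2(G)$ is a subgroup and $x=x_px_{p'}$, it suffices to show $x_p,x_{p'}\in\op{F}_2(G)$ separately. For the $p$-part this is immediate: $x_p$ is a $p$-element of $p$-power index, so Lemma \ref{wielandt} gives $x_p\in\rad{p}{G}\leqslant\fit{G}\leqslant\op{F}_2(G)$. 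Splitting the $p'$-part further into its $q$-parts (each of which again has $p$-power index, by the same centraliser argument), everything reduces to the following assertion, which I would prove by induction on $\abs{G}$: for a prime $q\neq p$, every $q$-element $y\in G$ with $i_G(y)$ a power of $p$ lies in $\op{F}_2(G)$.

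For the inductive statement I may assume $y\neq 1$, so $i_G(y)=p^b$ with $b\geqslant 1$ (if $b=0$ then $y\in\ze{G}\leqslant\fit{G}$). At this point Burnside's classical theorem on conjugacy classes of prime power size enters, as $\abs{y^G}=p^b>1$ forces $G$ to be non-simple, providing a proper nontrivial normal subgroup to work with. I would then split into two cases according to $\rad{p'}{G}$. If $\rad{p'}{G}\neq 1$, I pass to $\overline{G}=G/\rad{p'}{G}$: the image $\overline{y}$ is a $q$-element whose index divides $p^b$, so the inductive hypothesis yields $\overline{y}\in\op{F}_2(\overline{G})$, and it remains to transfer this back to $G$. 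If $\rad{p'}{G}=1$, then $\fit{G}=\rad{p}{G}$, and the goal becomes to place $y$ into the Fitting subgroup of $G/\rad{p}{G}$. Here I would use that $\ce{G}{y}$ contains a full Sylow $q$-subgroup $Q\in\syl{q}{G}$ (its index being prime to $q$), so that $y$ may be chosen inside $Q$ and in fact $y\in\ze{Q}$; a Burnside-type fusion/transfer argument then forces the image of $y$ into $\fit{G/\rad{p}{G}}$, that is, $y\in\op{F}_2(G)$.

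The main obstacle is precisely this $p'$-part. Unlike the $p$-part, there is no Wielandt-type containment for a $q$-element of $p$-power index with $q\neq p$, so the conclusion cannot be read off directly and must be extracted from Burnside's theorem together with the interplay between class sizes and the Fitting series. The delicate point is that membership in $\op{F}_2$ is not well behaved under arbitrary quotients: in general $\op{F}_2(G/N)$ can pull back to something strictly larger than $\op{F}_2(G)$ even when $N\leqslant\fit{G}$, so in the case $\rad{p'}{G}\neq 1$ one must justify carefully that the inductive conclusion on $\overline{y}$ genuinely descends to $y\in\op{F}_2(G)$, and in the case $\rad{p'}{G}=1$ one must make the fusion argument actually land $y$ in $\fit{G/\rad{p}{G}}$. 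This transfer bookkeeping, powered by the character-theoretic (and classification-free) input of Burnside's result, is where the real work lies.
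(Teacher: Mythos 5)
This proposition is quoted in the paper directly from \cite[Theorem 1]{CC} with no proof given, so your attempt must be measured against the actual argument of Camina and Camina. Your reduction steps are correct as far as they go: writing $x$ as the commuting product of its primary components, noting each component is a power of $x$ and hence again of $p$-power index, and disposing of the $p$-part via Lemma \ref{wielandt} is exactly how one isolates the genuinely hard case of a $q$-element $y$, $q\neq p$, whose index is a power of $p$.

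But that hard case is precisely where your text stops being a proof. In the case $\rad{p'}{G}\neq 1$ you apply induction to $G/\rad{p'}{G}$ and must then pull membership in $\op{F}_2$ back along the quotient map; as you yourself observe, the preimage of $\op{F}_2(G/N)$ can be strictly larger than $\op{F}_2(G)$ even for $N\leqslant\fit{G}$ (already $G=\op{Sym}(4)$ with $N=\rad{2}{G}$ shows this), and you never supply the missing justification. In the case $\rad{p'}{G}=1$ the whole conclusion is delegated to ``a Burnside-type fusion/transfer argument'' that is never specified: Burnside's non-simplicity theorem together with the (correct) observation that $y\in\ze{Q}$ for some $Q\in\syl{q}{G}$ yields a proper nontrivial normal subgroup and nothing more, and there is no standard transfer argument that places the image of $y$ in $\fit{G/\rad{p}{G}}$ from this data --- producing one is essentially equivalent to the theorem being proved. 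The ingredient that actually powers the proof in \cite{CC}, and which your sketch never invokes, is Kazarin's theorem: if $i_G(x)$ is a prime power then $\langle x^G\rangle$ is \emph{soluble}. This is a character-theoretic result strictly deeper than Burnside's non-simplicity theorem. With it, one works inside the soluble normal subgroup $N=\langle x^G\rangle$ (where $x$ still has prime power index), proves the statement there, and then transfers upward using the fact that $\op{F}_2(N)$ is characteristic in $N\unlhd G$, hence normal in $G$, so its image in $G/\fit{G}$ is a nilpotent normal subgroup and therefore $\op{F}_2(N)\leqslant\op{F}_2(G)$; note that this containment passes \emph{up} from a normal subgroup rather than \emph{down} from a quotient, which is exactly why it succeeds where your transfer step fails. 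As it stands, your proposal correctly identifies the difficulty but does not overcome it.
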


The main result of \cite{B_non} is the following one.

\begin{proposition}
\label{lemma_non_brough}
Let $G$ be a group which contains a non-trivial normal $p$-subgroup $N$ for $p$ a prime. Then each $x\in N$ such that $p$ does not divide $i_G(x)$ is non-vanishing in $G$.
\end{proposition}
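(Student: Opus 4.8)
The plan is to fix an arbitrary $\chi\in\op{Irr}(G)$ and show directly that $\chi(x)\neq 0$, the whole argument resting on one structural observation together with an elementary fact about sums of $p$-power roots of unity. First I would exploit the hypothesis $p\nmid i_G(x)$: since $i_G(x)=\abs{G:\ce{G}{x}}$ is coprime to $p$, the centraliser $\ce{G}{x}$ contains some $P\in\syl{p}{G}$. As $N$ is a normal $p$-subgroup we have $N\leqslant P\leqslant\ce{G}{x}$, so $x$ centralises $N$ and therefore $x\in\ze{N}$ (note $x$ is automatically a $p$-element, lying in the $p$-group $N$). Moreover, since $N$ is normal, each conjugate $x^g$ again lies in $N$ and centralises $N^g=N$; hence the entire class $x^G$ is contained in the abelian $p$-group $\ze{N}$. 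This last fact, that $x^G\subseteq\ze{N}$ rather than merely $x\in\ze{N}$, is what makes the evaluation below possible.

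Next I would run Clifford theory downstairs. Let $\theta\in\op{Irr}(N)$ be an irreducible constituent of $\chi_N$, with inertia group $T=I_G(\theta)$ and $e=[\chi_N,\theta]$, so that $\chi_N=e\sum_i\theta_i$ where the $\theta_i$ are the $\abs{G:T}$ distinct $G$-conjugates of $\theta$. Because every $y\in x^G$ lies in $\ze{N}$, Schur's lemma gives $\theta(y)=\omega_\theta(y)\theta(1)$, where $\omega_\theta$ is the associated linear (central) character of $\ze{N}$, whose values are roots of unity of $p$-power order. Averaging the conjugates of $\theta$ over $G$ and regrouping by conjugacy class, I would obtain
$$\chi(x)=e\sum_{i}\theta_i(x)=\frac{e\,\theta(1)\,\abs{\ce{G}{x}}}{\abs{T}}\sum_{y\in x^G}\omega_\theta(y).$$
The prefactor here is a nonzero rational, so $\chi(x)=0$ would force $\sum_{y\in x^G}\omega_\theta(y)=0$.

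The final step is purely arithmetic. The sum $\sum_{y\in x^G}\omega_\theta(y)$ has exactly $\abs{x^G}=i_G(x)$ summands, each a root of unity of $p$-power order. Letting $p^a$ bound the exponent of $\ze{N}$ and working modulo the unique prime $\mathfrak{p}$ above $p$ in $\mathbb{Z}[\zeta_{p^a}]$ (where $p$ is totally ramified), every such root of unity is congruent to $1$, so the whole sum is congruent to $i_G(x)$ modulo $\mathfrak{p}$. A vanishing sum would then give $p\mid i_G(x)$, contrary to the hypothesis; equivalently, one may invoke the Lam--Leung theorem on vanishing sums of roots of unity of prime-power order, which forces the number of summands to be divisible by $p$. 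Either way the sum is nonzero, whence $\chi(x)\neq 0$, and since $\chi$ was arbitrary, $x$ is non-vanishing in $G$.

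I expect the main obstacle to be the bookkeeping in the Clifford-theoretic computation, namely the clean passage from $\sum_i\theta_i(x)$ to the class sum $\sum_{y\in x^G}\omega_\theta(y)$ and the verification that the resulting scalar prefactor is genuinely nonzero; the concluding root-of-unity estimate is short once $x^G\subseteq\ze{N}$ is in place. The conceptually essential ingredient is precisely this containment, which transfers the coprimality hypothesis $p\nmid i_G(x)$ into a statement about a sum of $p$-power roots of unity whose number of terms is prime to $p$.
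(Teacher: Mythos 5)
Your argument is correct and complete. One point of context first: the paper does not actually prove this proposition; it imports it as the main result of \cite{B_non}, so there is no internal proof to compare against, and your write-up stands as a self-contained substitute. Checking your steps: since $p\nmid i_G(x)$, a Sylow $p$-subgroup of $\ce{G}{x}$ is a Sylow $p$-subgroup $P$ of $G$, and $N\leqslant\rad{p}{G}\leqslant P\leqslant\ce{G}{x}$ indeed gives $x\in\ze{N}$; note that the containment $x^G\subseteq\ze{N}$, which you prove by conjugating, is in fact automatic, since $\ze{N}$ is characteristic in $N$ and hence normal in $G$. The Clifford bookkeeping you flagged as the delicate step is sound: summing $\theta(x^g)$ over $g\in G$ and counting fibres in two ways (each conjugate $\theta_i$ arises from $\abs{T}$ elements $g$, each class element $y\in x^G$ from $\abs{\ce{G}{x}}$ elements $g$) yields exactly $\abs{T}\sum_i\theta_i(x)=\abs{\ce{G}{x}}\,\theta(1)\sum_{y\in x^G}\omega_\theta(y)$, so your displayed formula for $\chi(x)$ holds, its prefactor $e\,\theta(1)\abs{\ce{G}{x}}/\abs{T}$ is a positive rational, and the concluding congruence $\sum_{y\in x^G}\omega_\theta(y)\equiv i_G(x)\pmod{\mathfrak{p}}$ (equivalently, Lam--Leung) rules out vanishing because $p\nmid i_G(x)$. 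A small streamlining you may wish to adopt: since $Z=\ze{N}$ is itself normal in $G$ and $x\in Z$, you can run the identical orbit-sum computation with a linear constituent $\lambda$ of $\chi_Z$ in place of $\theta$; the constituents of $\chi_Z$ are already linear, so Schur's lemma, the central character $\omega_\theta$, and the factor $\theta(1)$ all drop out, and $\chi(x)=e'\sum_i\lambda_i(x)$ finishes by the same class-sum regrouping and root-of-unity congruence. Either way, the essential transfer you identify --- converting $p\nmid i_G(x)$ into a sum of $p$-power roots of unity with a number of terms prime to $p$ --- is exactly the right mechanism, and the proof is valid as written.
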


Finally, the lemma below is elementary.

\begin{lemma}
Let $N$ be a normal subgroup of a group $G$, and $A$ be a subgroup of $G$. Let $p$ be a prime. We have:
\begin{itemize}
	\item[(a)] $i_N(x)$ divides $i_G(x)$, for any $x\in N$.
	
	\item[(b)] $i_{G/N}(xN)$ divides $i_G(x)$, for any $x\in G$.
\end{itemize}
\end{lemma}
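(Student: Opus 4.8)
The plan is to reduce both divisibilities to standard index computations for centralisers, using throughout that $i_G(x)=\abs{G:\ce{G}{x}}$. Neither the subgroup $A$ nor the prime $p$ intervenes in either assertion, so the two parts can be dispatched independently by elementary arguments with the isomorphism theorems.

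For part (a), the starting point is the identity $\ce{N}{x}=\ce{G}{x}\cap N$, valid for every $x\in N$. Since $N$ is normal in $G$, the product $N\ce{G}{x}$ is a subgroup of $G$, and the second isomorphism theorem yields
$$i_N(x)=\abs{N:\ce{N}{x}}=\abs{N:\ce{G}{x}\cap N}=\abs{N\ce{G}{x}:\ce{G}{x}}.$$
Because $\ce{G}{x}\leqslant N\ce{G}{x}\leqslant G$, this last index divides $\abs{G:\ce{G}{x}}=i_G(x)$, which is exactly the claim. (In fact the numerical identity here rests only on the order formula for a product of two subgroups, so normality of $N$ serves merely to guarantee that $N\ce{G}{x}$ is a subgroup.)

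For part (b), where normality of $N$ is what allows one to form $G/N$, the key observation is that the canonical projection $G\to G/N$ carries $\ce{G}{x}$ into $\ce{G/N}{xN}$: if $g$ commutes with $x$, then $gN$ commutes with $xN$. This gives the inclusion $\ce{G}{x}N/N\leqslant\ce{G/N}{xN}$, whence $i_{G/N}(xN)=\abs{G/N:\ce{G/N}{xN}}$ divides $\abs{G/N:\ce{G}{x}N/N}$. By the correspondence theorem the latter equals $\abs{G:\ce{G}{x}N}$, and this in turn divides $\abs{G:\ce{G}{x}}=i_G(x)$ since $\ce{G}{x}\leqslant\ce{G}{x}N$. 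Chaining these three divisibilities gives the result.

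There is no genuine obstacle in so elementary a statement; the only point deserving a word of care — and the reason one obtains divisibility rather than equality in (b) — is that the inclusion $\ce{G}{x}N/N\leqslant\ce{G/N}{xN}$ may well be proper, as passing to a quotient can produce new cosets centralising $xN$. Consequently the argument must be phrased entirely in terms of ``divides'', and no identification of the two indices should be attempted; beyond that, the proof is a routine application of the isomorphism and correspondence theorems.
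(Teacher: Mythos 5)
Your proof is correct. The paper gives no proof of this lemma at all --- it is introduced with the remark that it is ``elementary'' and stated without argument --- and your reasoning is the standard one: part (a) via $\ce{N}{x}=\ce{G}{x}\cap N$ and the second isomorphism theorem, part (b) via the inclusion $\ce{G}{x}N/N\leqslant \ce{G/N}{xN}$ together with the correspondence theorem and the index tower, including the correct caution that this inclusion may be proper, which is why only divisibility (not equality) holds in (b).
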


\begin{remark}
Note that, hereafter, in the results stated the arithmetical hypotheses on the indices are inherited by non-trivial quotients of core-factorisations. Indeed, let $G=AB$ be a core-factorisation and suppose for an element $x\in A\cup B$ that $i_G(x)$ is a prime power, square-free, or not divisible by a given prime, respectively. Since $i_{G/N}(xN)$ divides $i_G(x)$ by the above lemma, we get that $i_{G/N}(xN)$ is also a prime power, square-free, or not divisible by such prime, respectively.
\end{remark}

We are now ready to prove the following vanishing versions of \cite[Theorem A (1-2)]{FMOprime} and \cite[Theorem B (1)]{FMOprime} for core-factorisations, respectively. We emphasize that the techniques used in that approach are not valid when we work with zeros of irreducible characters.

\begin{theorem}
\label{theoremCCvanishing}
Let $G=AB$ be a core-factorisation. Let $p$ be a prime, and $P\in\syl{p}{G}$. Assume that every $p$-element $x\in A\cup B$ vanishing in $G$ has prime power index. Then:
\begin{enumerate}
	\item[\emph{(1)}] If all the considered indices are $p$-numbers, then $P$ is normal in $G$.

	\item[\emph{(2)}] $G/\ce{G}{\rad{p}{G}}$ has a normal Sylow $p$-subgroup.

	\item[\emph{(3)}] $G/\fit{G}$ has a normal Sylow $p$-subgroup. 
	
	\item[\emph{(4)}] $G/\rad{p'}{G}$ has a normal Sylow $p$-subgroup. So $G$ is $p$-soluble of $p$-length 1.
\end{enumerate}
\end{theorem}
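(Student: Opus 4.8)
The plan is to establish the four items in the order (1), (2), then $p$-solubility, and finally (4) and (3), each step feeding the next.

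For (1) I would argue by a minimal counterexample $G$ and first reduce to $\rad{p}{G}=1$. If $1\neq\rad{p}{G}$ is proper, then $G/\rad{p}{G}$ is again a core-factorisation (Lemma \ref{lemacore}) inheriting the hypothesis, so by minimality $P/\rad{p}{G}$ is normal in $G/\rad{p}{G}$, whence $P\lhd G$; and if $\rad{p}{G}=G$ then $P=G$. Once $\rad{p}{G}=1$, every vanishing $p$-element $x\in A\cup B$ has $p$-number index, so Wielandt's Lemma \ref{wielandt} forces $x\in\rad{p}{G}=1$, i.e. $x=1$. Hence every $p$-element of $A\cup B$ is non-vanishing in $G$, and Theorem \ref{SylowNormal} yields that $P$ is normal, contradicting the choice of $G$.

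For (2) I set $V=\rad{p}{G}$ and $C=\ce{G}{V}$, and may assume $C\neq G$ (otherwise $G/C=1$), so that $G/C=(AC/C)(BC/C)$ is a core-factorisation by Lemma \ref{lemacore}. The key claim is that every vanishing $p$-element of $AC/C\cup BC/C$ has $p$-number index in $G/C$; granting this, part (1) applied to $G/C$ produces the desired normal Sylow $p$-subgroup. To prove the claim I lift such an element to a vanishing $p$-element $x\in A\cup B$ (Remark \ref{remarkhyp}); by hypothesis $i_G(x)$ is a prime power, and so is $i_{G/C}(xC)$ since it divides $i_G(x)$. If a prime $r\neq p$ divided $i_{G/C}(xC)$, then $i_G(x)$ would be a power of $r$, so $p\nmid i_G(x)$ and $\ce{G}{x}$ contains a Sylow $p$-subgroup; this subgroup contains the normal $p$-subgroup $V$, forcing $x\in\ce{G}{V}=C$ and $xC=1$, against $xC$ being vanishing. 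Hence $i_{G/C}(xC)$ is a $p$-number.

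For $p$-solubility I would use a chief series as in Lemma \ref{core_charac}(iii), where each factor $N_i/N_{i-1}$ is covered by $A$ or $B$. If some such factor were non-abelian with order divisible by $p$, then Proposition \ref{reduction_lemma} applied in $G/N_{i-1}$ yields a vanishing $p$-element $\overline{y}$ of the minimal normal subgroup $N_i/N_{i-1}$; since this subgroup is a direct product of non-abelian simple groups on which $\overline{y}$ has non-trivial components, and a non-abelian simple group has no non-trivial element of prime-power index (Burnside), the index $i_{N_i/N_{i-1}}(\overline{y})$ is divisible by at least two primes. As $N_i/N_{i-1}$ is covered by a factor, Remark \ref{remarkhyp} lets me lift $\overline{y}$ to a vanishing $p$-element $y\in A\cup B$ with $i_G(y)$ a multiple of $i_{N_i/N_{i-1}}(\overline{y})$, hence not a prime power, contradicting the hypothesis. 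Thus every chief factor of $p$-order is abelian and $G$ is $p$-soluble. For (4) I then pass to $\overline{G}=G/\rad{p'}{G}$, which inherits the hypothesis and has $\rad{p'}{\overline{G}}=1$; $p$-solubility gives $\ce{\overline{G}}{\rad{p}{\overline{G}}}\leqslant\rad{p}{\overline{G}}$, so by (2) applied to $\overline{G}$ the quotient $\overline{G}/\rad{p}{\overline{G}}$ has a normal Sylow $p$-subgroup, whose preimage in $\overline{G}$ is a normal $p$-subgroup and therefore lies in $\rad{p}{\overline{G}}$. Hence that Sylow subgroup is trivial and $\rad{p}{\overline{G}}$ is a normal Sylow $p$-subgroup of $\overline{G}$, which is exactly (4) together with $p$-length $1$. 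Finally (3) follows at once, since $\rad{p'}{G}\leqslant\fit{G}$ makes $G/\fit{G}$ a quotient of $G/\rad{p'}{G}$, and quotients of groups with a normal Sylow $p$-subgroup inherit one.

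The step I expect to be the main obstacle is $p$-solubility: excluding non-abelian chief factors of order divisible by $p$ requires both locating a vanishing $p$-element inside the factor via Proposition \ref{reduction_lemma} and invoking the fact that non-abelian simple groups possess no non-trivial element of prime-power index, so that the produced index genuinely fails to be a prime power. The delicate bookkeeping is to carry the located element faithfully up the chief series while keeping it inside $A\cup B$ through Remark \ref{remarkhyp}, and, separately, to verify the $p$-constraint $\ce{\overline{G}}{\rad{p}{\overline{G}}}\leqslant\rad{p}{\overline{G}}$ needed to convert item (2) into the sharp conclusion of item (4).
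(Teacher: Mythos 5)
Your treatments of (1), (2) and (4) are correct, and in places genuinely different from the paper. For (1) the paper simply says to rerun the whole proof of Theorem \ref{SylowNormal}, deriving the contradictions from Wielandt's lemma; you instead reduce to $\rad{p}{G}=1$, note that Wielandt then leaves no non-trivial vanishing $p$-element in $A\cup B$, and invoke Theorem \ref{SylowNormal} as a black box --- a cleaner deduction. Your (2) is exactly the paper's argument. For (4) the paper deduces the claim from (3) (after reducing to $\rad{p'}{G}=1$, where $\fit{G}=\rad{p}{G}\leqslant P$, so (3) gives $P=P\fit{G}\trianglelefteq G$), whereas you prove $p$-solubility directly from a chief series as in Lemma \ref{core_charac}(iii): a non-abelian chief factor of order divisible by $p$, covered by one of the factors, produces via Proposition \ref{reduction_lemma} and Remark \ref{remarkhyp} a vanishing $p$-element $y\in A\cup B$ whose index $i_G(y)$ is divisible by $i_{N_i/N_{i-1}}(\overline{y})$, and Burnside's theorem on prime-power class sizes in non-abelian simple groups makes that index divisible by two distinct primes --- contradiction. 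Combined with the Hall--Higman containment $\ce{\overline{G}}{\rad{p}{\overline{G}}}\leqslant\rad{p}{\overline{G}}$ when $\rad{p'}{\overline{G}}=1$ and with (2), this yields (4) without ever touching (3); that route is sound, and all the inheritance checks (Lemma \ref{lemacore}, Remark \ref{remarkhyp}, divisibility of indices in quotients) are in order.

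The gap is in (3). The containment $\rad{p'}{G}\leqslant\fit{G}$ you use is false in general, since the $p'$-core need not be nilpotent: for $G=\op{Sym}(3)$ and $p=5$ one has $\rad{p'}{G}=G$ while $\fit{G}=A_3$, so $G/\fit{G}$ is not a quotient of $G/\rad{p'}{G}$. Worse, (3) cannot be recovered from (4) by any formal argument: $p$-length $1$ does not imply that $G/\fit{G}$ has a normal Sylow $p$-subgroup. Take $G=C_7^2\rtimes\op{Sym}(3)$ with $\op{Sym}(3)$ acting faithfully and irreducibly, and $p=2$: then $\rad{2'}{G}=C_7^2\rtimes C_3$ and $G/\rad{2'}{G}\cong C_2$, yet $\fit{G}=C_7^2$ and $G/\fit{G}\cong\op{Sym}(3)$ has no normal Sylow $2$-subgroup. (Of course such a $G$ violates the theorem's vanishing hypotheses; the point is that your deduction of (3) uses nothing beyond the statement of (4).) One must re-use the hypotheses, and this is where the paper brings in the two ingredients your proposal never invokes: if $G/\fit{G}$ had no normal Sylow $p$-subgroup, Theorem \ref{SylowNormal} would give a vanishing $p$-element $1\neq\overline{x}=x\fit{G}\in\overline{A}\cup\overline{B}$; lifting via Remark \ref{remarkhyp}, $x\notin\fit{G}$ is a vanishing $p$-element of $A\cup B$, so $i_G(x)$ is a prime power, indeed a $q$-power with $q\neq p$ (otherwise Wielandt puts $x\in\rad{p}{G}\leqslant\fit{G}$); then Camina--Camina (Proposition \ref{CaminaCamina}) places $x\in\op{F}_2(G)$, hence $1\neq\overline{x}\in\rad{p}{G/\fit{G}}$, and Brough's Proposition \ref{lemma_non_brough} forces $p\mid i_{G/\fit{G}}(\overline{x})\mid i_G(x)$, a contradiction. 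With (3) repaired in this way, the rest of your write-up stands, and your independent proof of (4) even decouples it from (3).
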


\begin{proof}
(1) If all the indices of vanishing $p$-elements $x\in A\cup B$ are $p$-numbers, then it is enough to reproduce the proof of Theorem \ref{SylowNormal}. Notice that the contradictions now will be derived from Lemma \ref{wielandt}.

(2) Let denote $\overline{G}:=G/\ce{G}{\rad{p}{G}}$. We may assume $\overline{G}\neq 1$. We show next that for every $p$-element $1\neq \overline{x}=x\ce{G}{\rad{p}{G}}\in\overline{A}\cup \overline{B}$ vanishing in $\overline{G}$ it holds that $i_{\overline{G}}(\overline{x})$ is a $p$-number, and then (1) applies. Since by Remark \ref{remarkhyp} we can suppose that $x\in A\cup B$ is a $p$-element vanishing in $G$, by assumptions we get that $i_G(x)$ is a prime power (actually a $p$-number, because $x\notin \ce{G}{\rad{p}{G}}$). Therefore $i_{\overline{G}}(\overline{x})$ is also a $p$-number and we are done.

(3) Let denote $\overline{G}:=G/\fit{G}$, and let assume $\overline{G}\neq 1$. If the statement is false, then by Theorem \ref{SylowNormal} there exists a vanishing $p$-element $1\neq \overline{x}=x\fit{G}$ in $\overline{A}\cup \overline{B}$. By Remark \ref{remarkhyp}, $x\notin \fit{G}$ is a vanishing $p$-element in $A\cup B$, and so $i_G(x)$ is a power of a prime $q\neq p$. It follows $x\in \op{F}_2(G)$ by Proposition \ref{CaminaCamina}, so $1\neq \overline{x}\in \rad{p}{\overline{G}}$. Proposition \ref{lemma_non_brough} implies that $p$ divides $i_{\overline{G}}(\overline{x})$, and so $p$ divides $i_G(x)$, the final contradiction.

(4) We proceed by induction on $\abs{G}$ in order to show that $P\rad{p'}{G}$ is normal in $G$. We may assume $\rad{p'}{G}=1$, and by (3) we get that $P\fit{G}=P$ is normal in $G$. The second assertion about the $p$-solubility of $G$ follows directly.
\end{proof}

\medskip

We remark that the vanishing analogue of \cite[Theorem B (2)]{FMOprime} is not true, that is, if the considered vanishing indices are powers of primes distinct from $p$, then the Sylow $p$-subgroup might not be abelian:

\begin{example}
Let $G$ be a Suzuki group of degree 8, and let $H$ be the normaliser of a Sylow $2$-subgroup of $G$. Then $H$ is a core-factorisation of its Sylow subgroup of order $2$ and a Sylow subgroup of order $7$, and $H$ does not have vanishing $2$-elements. Nevertheless, the Sylow $2$-subgroup of $H$ is non-abelian.
\end{example}

Moreover, \cite[Theorem B]{FMOprime} asserts that if all the $p$-elements in a factor have prime power indices in the whole factorised group, then there is a unique prime that divides all the considered indices. However, we do not know if the vanishing version of this fact is true.

Finally, note that if we consider the assumptions in Theorem \ref{theoremCCvanishing} for every prime in $\pi(G)$, then the third statement tells us that $G/\fit{G}$ is nilpotent. In fact, the following result shows that $G/\fit{G}$ is abelian for such a group (compare with \cite[Corollary C (1)]{FMOprime}).

\begin{corollary}
\label{G/Fabelian}
Let $G=AB$ be a core-factorisation. If every prime power order element $x\in A\cup B$ vanishing in $G$ has prime power index, then $G/\fit{G}$ is abelian. In particular, if these prime powers are actually  $p$-numbers for a prime $p$, then $G$ has a normal Sylow $p$-subgroup and abelian Hall $p'$-subgroups.
\end{corollary}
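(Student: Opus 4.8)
The plan is to establish the two assertions separately, reducing the first to a nilpotency statement already available and then upgrading nilpotent to abelian. For the first assertion I would apply Theorem \ref{theoremCCvanishing}(3) to each prime $p\in\pi(G)$: since every vanishing $p$-element of $A\cup B$ is in particular a prime power order element, the hypothesis forces its index to be a prime power, so statement (3) yields a normal Sylow $p$-subgroup of $\overline{G}:=G/\fit{G}$. As this holds for all $p$, the quotient $\overline{G}$ is nilpotent, and it only remains to prove it is abelian. Note that $\overline{G}=\overline{A}\,\overline{B}$ is again a core-factorisation by Lemma \ref{lemacore}, and that the vanishing and arithmetical hypotheses descend to it by Remark \ref{remarkhyp} and the subsequent remark on indices.

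Suppose for contradiction that $\overline{G}$ is not abelian. Being nilpotent, it then has a non-abelian (normal) Sylow $p$-subgroup $\overline{P}$ for some prime $p$. Since $\overline{G}$ is trivially prefactorised in itself, \cite[Lemma 2]{FMOprime} shows that its unique Sylow subgroup $\overline{P}=(\overline{P}\cap\overline{A})(\overline{P}\cap\overline{B})$ is prefactorised. As $\overline{G}=\overline{P}\times\overline{K}$ with $\overline{K}$ a Hall $p'$-subgroup, one has $\ze{\overline{G}}\cap\overline{P}=\ze{\overline{P}}$; since $\overline{P}$ is non-abelian, one factor, say $\overline{P}\cap\overline{A}$, is not contained in $\ze{\overline{P}}$, so I would pick a $p$-element $\overline{x}\in(\overline{P}\cap\overline{A})\smallsetminus\ze{\overline{G}}$. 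By Proposition \ref{INW_supersoluble} (nilpotent case) $\overline{x}$ is vanishing in $\overline{G}$, and lifting via Remark \ref{remarkhyp} to a $p$-element $x\in A$ vanishing in $G$, the hypothesis gives $i_G(x)=q^a$ a prime power. The crux is the dichotomy on $q$: if $q\neq p$, then since $\overline{G}$ is nilpotent the index $i_{\overline{G}}(\overline{x})$ of the $p$-element $\overline{x}$ is a $p$-number while dividing $q^a$, so $i_{\overline{G}}(\overline{x})=1$ and $\overline{x}\in\ze{\overline{G}}$, a contradiction; if $q=p$, then $x$ is a $p$-element of $p$-power index, so Lemma \ref{wielandt} forces $x\in\rad{p}{G}\leqslant\fit{G}$, whence $\overline{x}=1\in\ze{\overline{G}}$, again a contradiction. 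Hence $\overline{G}$ is abelian. I expect this dichotomy — pairing Wielandt's lemma in the on-prime case with the coprimality argument inside the nilpotent quotient in the off-prime case — together with the justification that $\overline{P}$ is prefactorised, to be the main obstacle.

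For the final assertion, assume all the relevant indices are $p$-numbers for a fixed prime $p$. Applying Theorem \ref{theoremCCvanishing}(1) to $p$ (the $p$-elements being prime power order elements) immediately gives that $P\in\syl{p}{G}$ is normal, so $P=\rad{p}{G}$. To obtain that the Hall $p'$-subgroups are abelian, I would pass to $\overline{G}:=G/P$, a $p'$-group with the core-factorisation $\overline{G}=(AP/P)(BP/P)$, whose Hall $p'$-subgroups are isomorphic to $\overline{G}$ itself. For any prime power order element $\overline{x}\in AP/P\cup BP/P$ vanishing in $\overline{G}$, Remark \ref{remarkhyp} provides a vanishing prime power order element $x\in A\cup B$ of $G$, so $i_G(x)$ is a $p$-number; then $i_{\overline{G}}(\overline{x})$ divides $i_G(x)$ and is a $p'$-number (as $\overline{G}$ is a $p'$-group), forcing $i_{\overline{G}}(\overline{x})=1$, i.e. $\overline{x}\in\ze{\overline{G}}$. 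But central elements are never vanishing, since they act as a nonzero scalar on every irreducible character, contradicting the choice of $\overline{x}$. Thus $AP/P\cup BP/P$ contains no vanishing prime power order element, and Corollary \ref{propositionKEY} yields that $\overline{G}=G/P$ is abelian; therefore the Hall $p'$-subgroups of $G$ are abelian, completing the proof.
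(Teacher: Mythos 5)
Your proof is correct, and its overall skeleton coincides with the paper's: Theorem \ref{theoremCCvanishing}(3) applied to all primes gives nilpotency of $G/\fit{G}$, Wielandt's lemma handles the case of a $p$-power index, and the second assertion (Theorem \ref{theoremCCvanishing}(1) for normality of $P$, then passing to $G/P$, forcing any would-be vanishing element to be central via divisibility of indices, and invoking Corollary \ref{propositionKEY}) is essentially verbatim the paper's argument. Where you diverge is in the mechanics of the first assertion. The paper argues directly: any vanishing prime power order element $\overline{x}\in\overline{A}\cup\overline{B}$ lies in $\rad{p}{\overline{G}}$ by nilpotency, so Proposition \ref{lemma_non_brough} forces $p\mid i_{\overline{G}}(\overline{x})$, hence $i_G(x)$ is a $p$-number, Wielandt gives $\overline{x}=1$, and Corollary \ref{propositionKEY} concludes. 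You instead assume $\overline{G}$ non-abelian, use \cite[Lemma 2]{FMOprime} to prefactorise the non-abelian normal Sylow subgroup $\overline{P}=(\overline{P}\cap\overline{A})(\overline{P}\cap\overline{B})$, extract a non-central $p$-element of a factor, declare it vanishing by Proposition \ref{INW_supersoluble}, and run the dichotomy on the prime $q$ with $i_G(x)=q^a$: your $q\neq p$ branch replaces Proposition \ref{lemma_non_brough} by the elementary observation that indices of $p$-elements in a nilpotent group are $p$-numbers, and your $q=p$ branch is the paper's Wielandt step. In effect you inline the relevant direction of Corollary \ref{propositionKEY} (whose own proof rests on Proposition \ref{INW_supersoluble}) rather than citing it; this buys a self-contained, lemma-light contradiction at the cost of needing the prefactorisation fact from \cite{FMOprime}, which the paper's proof of this corollary avoids entirely. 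Both lifting steps via Remark \ref{remarkhyp} are used exactly as the paper uses them, so no gap arises there.
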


\begin{proof}
$G/\fit{G}$ is nilpotent by Theorem \ref{theoremCCvanishing} (3). Let denote by $\overline{G}:=G/\fit{G}$, and let assume that $\overline{G}\neq 1$ and that there exists $1\neq \overline{x}=x\fit{G}$ a prime power order element in $\overline{A}\cup \overline{B}$ vanishing in $\overline{G}$. Then $\overline{x}$ is a $p$-element for some prime $p$, and we may suppose $x\in (A\cup B)\smallsetminus\fit{G}$ is a $p$-element vanishing in $G$. By assumption, we have that $i_G(x)$ is a prime power. Since $\overline{G}$ is nilpotent, then by Proposition \ref{lemma_non_brough} it follows that $i_{\overline{G}}(\overline{x})$ is a $p$-number, and so is $i_G(x)$. It follows by Wielandt's lemma that $x\in\rad{p}{G}$, so $\overline{x}=1$, a contradiction. Thus $\overline{G}$ does not have any vanishing prime power order element in $\overline{A}\cup \overline{B}$, and by Corollary \ref{propositionKEY} we get that it is abelian.

For the second assertion, note that $P$ is the unique Sylow $p$-subgroup of $G$ by Theorem \ref{theoremCCvanishing} (1), so we claim that $H\cong G/P$ is an abelian Hall $p'$-subgroup of $G$. Let denote $\tilde{G}:=G/P$, so $\tilde{G}=\tilde{A} \tilde{B}$. Hence, $\tilde{G}$ does not have any vanishing prime power order element in $\tilde{A}\cup\tilde{B}$, since otherwise those elements are central by our assumptions, a contradiction. So it follows by Corollary \ref{propositionKEY} that $\tilde{G} = G/P\cong H$ is abelian.
\end{proof}


\section{Square-free vanishing indices}
\label{sec_square}

In this last section we focus on vanishing indices in factorised groups which are square-free, motivated by previous developments in \cite{B, BK, DPS}. The next theorem treats the most extreme square-free case: when the vanishing indices are not divisible by a fixed prime $p$. We should comment that, although some arguments in the proof of the first statement are similar to those in \cite[Theorem 3.3]{BK}, we include them here for the sake of comprehensiveness.

\begin{theorem}
\label{theorem_p-reg}
Let $G=AB$ be a core-factorisation.
\begin{enumerate}
	\item[\emph{(1)}] Assume that $p$ does not divide $i_G(x)$ for every $p$-regular element of prime power order $x\in A\cup B$ vanishing in $G$. Then $G$ is $p$-nilpotent.
	
	\item[\emph{(2)}] If $p$ does not divide $i_G(x)$ for every prime power order element $x\in A\cup B$ vanishing in $G$, then $G$ is $p$-nilpotent with abelian Sylow $p$-subgroups.
\end{enumerate}
\end{theorem}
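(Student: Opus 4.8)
The plan is to prove (1) by a minimal-counterexample/induction argument, exploiting that $p$-nilpotent groups form a saturated formation, and then to derive (2) from (1) together with Corollary \ref{propositionKEY}.

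\textbf{Part (1).} I would take $G$ to be a counterexample of least order and argue by induction on $\abs{G}$. By the Remark preceding the theorem, by Lemma \ref{lemacore} and by Remark \ref{remarkhyp}, the hypothesis is inherited by every quotient $G/M=(AM/M)(BM/M)$; hence every proper quotient of $G$ is $p$-nilpotent. Since the class of $p$-nilpotent groups is a saturated formation, $G$ has a unique minimal normal subgroup $N$, and standard facts about minimal counterexamples give $\fra{G}=1$ together with $\ce{G}{N}=N=\fit{G}$ when $N$ is abelian (and $\ce{G}{N}=1$ when $N$ is non-abelian). As $G=AB$ is a core-factorisation, $N$ lies in a factor, say $N\leqslant A$. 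If $N$ were a $p'$-group, then from $G/N$ being $p$-nilpotent and $N$ being a normal $p'$-subgroup one concludes at once that $G$ is $p$-nilpotent, a contradiction; thus $p$ divides $\abs{N}$.

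\textbf{The abelian case.} Suppose $N$ is an (elementary) abelian $p$-group, so $N=\ce{G}{N}=\rad{p}{G}$. Since $G/N$ is $p$-nilpotent, $\rad{p'}{G/N}=R/N$ is nontrivial (otherwise $G/N$, hence $G$, is a $p$-group), and I would locate a vanishing $p$-regular element outside $N$ inside $R$. Picking a chief factor $L/N$ of $G$ with $L/N\leqslant R/N$: if $L/N$ is abelian, it is a $q$-group for some prime $q\neq p$, and since $\ce{L}{N}=L\cap\ce{G}{N}=N$ and $(\abs{N},\abs{L/N})=1$, Lemma \ref{contradiction_lemma2} shows every element of $L\smallsetminus N$ is vanishing in $G$; if $L/N$ is non-abelian, Proposition \ref{reduction_lemma} applied in $G/N$ yields a vanishing $q$-element of $L/N$ for some $q\neq p$. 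In either situation, using that $L$ is prefactorised by Lemma \ref{core_charac} and a Schur--Zassenhaus argument in the covering factor, I obtain a $q$-element $y\in(A\cup B)\smallsetminus N$ which is vanishing in $G$. By hypothesis $p\nmid i_G(y)$, so $\ce{G}{y}$ contains a Sylow $p$-subgroup and therefore $N=\rad{p}{G}\leqslant\ce{G}{y}$; this forces $y\in\ce{G}{N}=N$, contradicting $y\notin N$. Hence $N$ cannot be abelian.

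\textbf{The non-abelian case (main obstacle).} It remains to exclude $N$ non-abelian with $p\mid\abs{N}$. Here $\ce{G}{N}=1$ and the vanishing $q$-elements furnished by Proposition \ref{reduction_lemma} lie \emph{inside} $N$, so the clean centraliser argument above is unavailable. Writing $N=S_1\times\cdots\times S_k$ with $S_i\cong S$ simple, I would, via Propositions \ref{reduction_lemma} and \ref{simplegroups}, produce for a suitable prime $q\neq p$ dividing $\abs{S}$ a $q$-element $x\in N\leqslant A$ which is vanishing in $G$; the delicate requirement is to choose $x$ with $p\mid i_S(x)$ (equivalently, not centralised by a full Sylow $p$-subgroup of its component), since then $p\mid i_S(x)\mid i_G(x)$ contradicts the hypothesis. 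Establishing the existence of such a $p$-regular, $\op{Aut}(S)$-extendable vanishing element whose class size in $S$ is divisible by $p$ is the classification-dependent step, and this is the point I expect to be the main obstacle; it is handled exactly as in \cite[Theorem 3.3]{BK}. This completes the proof of (1).

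\textbf{Part (2).} The hypothesis of (2) contains that of (1), so $G$ is already $p$-nilpotent; set $K:=\rad{p'}{G}$, so that $G/K$ is a $p$-group. We may assume $K\lhd G$ (otherwise $G$ is a $p'$-group and its Sylow $p$-subgroup is trivial). Then $\widetilde{G}:=G/K=(AK/K)(BK/K)$ is again a core-factorisation by Lemma \ref{lemacore}, and it is a $p$-group. Given any vanishing prime power order element $\widetilde{x}\in\widetilde{A}\cup\widetilde{B}$, Remark \ref{remarkhyp} lets me choose a vanishing prime power order $x\in A\cup B$ with $\widetilde{x}=xK$; by hypothesis $p\nmid i_G(x)$, hence $p\nmid i_{\widetilde{G}}(\widetilde{x})$. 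But every index in a $p$-group is a $p$-power, so $i_{\widetilde{G}}(\widetilde{x})=1$ and $\widetilde{x}$ is non-vanishing, a contradiction. Thus $\widetilde{G}$ has no vanishing prime power order elements in its factors, and Corollary \ref{propositionKEY} yields that $\widetilde{G}\cong G/K$, a Sylow $p$-subgroup of $G$, is abelian.
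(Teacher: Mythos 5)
Your proposal is correct, and your part (2) coincides with the paper's own argument: pass to $\widetilde{G}=G/\rad{p'}{G}$, note that the index of a lifted vanishing element is simultaneously a $p$-power and a divisor of a number coprime to $p$, hence trivial, and apply Corollary \ref{propositionKEY}. In part (1), however, your treatment of the soluble case is genuinely different. The paper reduces to $\rad{p'}{G}=1$, takes a minimal normal subgroup $N$ in a factor, and when $N$ is soluble (hence a $p$-group) uses that $G/N$ is $p$-nilpotent to get $p$-separability and $\ce{G}{\rad{p}{G}}\leqslant\rad{p}{G}$; from this it concludes that there are \emph{no} vanishing $p$-regular prime power order elements in $A\cup B$ at all, and then invokes Corollary \ref{corollaryHall} with $\sigma=p'$ --- so the paper routes this case through the CFSG-dependent Theorem \ref{SylowNormal}. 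You instead set up the saturated-formation frame ($\fra{G}=1$, unique minimal normal $N=\ce{G}{N}=\fit{G}=\rad{p}{G}$) and manufacture an explicit vanishing $q$-element $y\notin N$ via Lemma \ref{contradiction_lemma2}, resp.\ Proposition \ref{reduction_lemma}, contradicting $p\nmid i_G(y)$ through $\rad{p}{G}\leqslant\ce{G}{y}$; this is essentially the mechanism of the paper's own proof of Theorem \ref{theoremp2}(2) transplanted here, and it buys independence from Theorem \ref{SylowNormal} in the abelian case (though Proposition \ref{reduction_lemma} still rests on the CFSG, as does the remaining case). Both your proof and the paper's dispose of the non-soluble/non-abelian case identically, by deferring to the second paragraph of the proof of \cite[Theorem 3.3]{BK}, so you have located the classification-dependent obstacle exactly where the paper does.

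Two details you glossed are routine but worth recording. First, you need the chief factor $L/N$ to lie in $R/N$ \emph{and} to be covered by $A$ or $B$: since $N=\rad{p}{G}$, the preimage of $\rad{p}{G/N}$ is a normal $p$-subgroup of $G$, so $\rad{p}{G/N}=1$; hence every minimal normal subgroup of $G/N$ lies in $R/N=\rad{p'}{G/N}$, and the core-factorisation of $G/N$ (Lemma \ref{lemacore}) supplies one covered by a factor. Second, no Schur--Zassenhaus argument is needed to land in the factors: if $L/N\leqslant XN/N$ with $X\in\{A,B\}$, then every coset of $N$ in $L$ has a representative $x\in X\cap L$, and the $q$-part of $x$ lies again in $X\cap L$ and represents the $q$-part of the coset; since in both of your sub-cases the vanishing elements fill whole cosets of $N$ (inflation from $G/N$ in the non-abelian sub-case, all of $L\smallsetminus N$ in the abelian one), this yields the desired vanishing $q$-element of $(A\cup B)\smallsetminus N$, exactly as in Remark \ref{remarkhyp}.
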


\begin{proof}
(1) Assume the result is false. We argue with $G$
a minimal counterexample to the theorem. By minimality, we may suppose that $\rad{p'}{G}=1$. Let $N$ be a minimal normal subgroup of $G$ such that $N\leqslant A$, for instance. If $N$ is soluble, since $p$ divides its order it follows that $N$ is a $p$-group. We can assume that $N$ is proper in $G$ since otherwise $G$ is a $p$-group, so by minimality we get
that $G/N$ is $p$-nilpotent. Hence $G$ is $p$-separable, and $\ce{G}{\rad{p}{G}}\leqslant\rad{p}{G}$. This
last fact and our assumptions produce that there are no $p$-regular elements of prime power order $x\in A\cup B$ vanishing in $G$, and Corollary \ref{corollaryHall} applies with $\sigma=p'$. Thus $N$ is non-soluble, and applying the same arguments as in the second paragraph in the proof of \cite[Theorem 3.3]{BK}, it can be obtained a $p$-regular element of prime power order in $N\leqslant A$ which is vanishing in $G$ and whose conjugacy class size in $G$ is divisible by $p$, the final contradiction.

(2) $G$ is $p$-nilpotent by (1). Let denote $\tilde{G}:=G/H$ where $H$ is the unique Hall $p'$-subgroup of $G$, and then $\tilde{G}=\tilde{A} \tilde{B}$. Then, $\tilde{G}$ does not have any vanishing prime power order element in $\tilde{A}\cup\tilde{B}$, because otherwise the hypotheses imply that those elements are central, a contradiction. Now in virtue of Corollary \ref{propositionKEY} we get that $\tilde{G}$ is abelian.
\end{proof}

\medskip

Note that Theorem \ref{theorem_p-reg} provides a vanishing version of \cite[Theorem 1.1]{BCL} for products of two groups, even relaxing the mutual permutability of the factors. We also remark that \cite[Theorem 3.3]{BK} is Theorem \ref{theorem_p-reg} (1) for the trivial factorisation. Indeed, (2) implies the next corollary, which improves the main result of \cite{DPS} by considering only vanishing indices of prime power order elements:

\begin{corollary}
Let $G$ be a group, and $p$ be a prime. If $p$ does not divide any vanishing index of a prime power order element, then $G$ is $p$-nilpotent with abelian Sylow $p$-subgroups.
\end{corollary}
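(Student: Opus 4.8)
The plan is to obtain this statement as the special case of Theorem \ref{theorem_p-reg} (2) in which one takes the trivial factorisation. First I would dispose of the degenerate case $G=1$, where the conclusion holds vacuously. Assuming $G\neq 1$, I set $A=B=G$; since $GG=G$ this is a factorisation $G=AB$, and by the first item of the Remark following Definition \ref{definition_core} (the case $1\neq G=A$), it is automatically a core-factorisation. Hence Theorem \ref{theorem_p-reg} is at my disposal for this factorisation.

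With $A=B=G$ one has $A\cup B=G$, so the hypothesis of Theorem \ref{theorem_p-reg} (2), that every prime power order element $x\in A\cup B$ vanishing in $G$ satisfies $p\nmid i_G(x)$, coincides verbatim with the hypothesis of the corollary, namely that $p$ divides no vanishing index of a prime power order element of $G$. I would therefore simply invoke Theorem \ref{theorem_p-reg} (2) to conclude that $G$ is $p$-nilpotent with abelian Sylow $p$-subgroups, as required.

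The point to stress is that there is no genuine obstacle left inside this argument: the entire difficulty has already been carried out in Theorem \ref{theorem_p-reg}, whose part (1) relies in turn on the CFSG-dependent results of Section \ref{sec_van}, in particular Proposition \ref{reduction_lemma} and the accompanying analysis of vanishing elements in simple groups. The only things demanding a moment's attention here are the trivial case $G=1$ and the check that the factorisation $G=G\cdot G$ really is a core-factorisation; both are immediate, so the corollary follows directly once Theorem \ref{theorem_p-reg} (2) is in hand.
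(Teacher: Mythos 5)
Your proposal is correct and is precisely the paper's own argument: the corollary is obtained by taking the trivial factorisation $G=A=B$ (a core-factorisation by the remark following Definition \ref{definition_core}) and applying Theorem \ref{theorem_p-reg} (2), whose hypothesis then coincides verbatim with that of the corollary. Your extra care with the degenerate case $G=1$ is a harmless refinement that the paper leaves implicit.
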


Regarding square-free vanishing indices, we first analyse those which are not divisible by $p^2$, for a fixed prime $p$. The next proposition is actually the vanishing version of \cite[Theorem A]{FMOsquare}. We point out that this result is valid for any arbitrary factorisation of a $p$-group.

\begin{proposition}
Let $p$ be a prime number and let $P=AB$ be a $p$-group such that $p^2$ does not divide $i_P(x)$ for all $x\in A\cup B$ vanishing in $P$. Then $P' \leqslant \fra{P} \leqslant \ze{P}$, $P'$ is elementary abelian and $\abs{P'}\leq p^2$.
\label{knoche_prop}
\end{proposition}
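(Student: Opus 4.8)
The plan is to reduce the statement to its non-vanishing analogue \cite[Theorem A]{FMOsquare}, exploiting the fact that $P$ is a $p$-group. The crucial observation is that $P$ is nilpotent, so Proposition \ref{INW_supersoluble} guarantees that every element of $P\smallsetminus\ze{P}$ is vanishing in $P$. In other words, in a $p$-group the vanishing elements are precisely the non-central ones.

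First I would argue that the vanishing hypothesis, which is a priori weaker, is in fact equivalent here to the full arithmetic condition on all of $A\cup B$. Indeed, take any $x\in A\cup B$. If $x\notin\ze{P}$, then $x$ is vanishing in $P$ by the remark above, so the hypothesis yields that $p^2$ does not divide $i_P(x)$. If instead $x\in\ze{P}$, then $i_P(x)=1$ and the same conclusion holds trivially. Hence $p^2\nmid i_P(x)$ for \emph{every} $x\in A\cup B$, vanishing or not.

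Once this is established, I would simply invoke \cite[Theorem A]{FMOsquare}, whose hypothesis is exactly that $p^2$ does not divide $i_P(x)$ for all $x\in A\cup B$ in a factorised $p$-group $P=AB$. That result delivers the full conclusion: $P'\leqslant\fra{P}\leqslant\ze{P}$, $P'$ is elementary abelian, and $\abs{P'}\leq p^2$. Note that no core-factorisation hypothesis is needed, which matches the remark that the proposition holds for an arbitrary factorisation of a $p$-group.

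The step to emphasise, rather than an obstacle, is the conceptual point that makes the reduction legitimate: although in a general group restricting attention to vanishing indices is a genuine weakening, for a $p$-group Burnside's theorem (in the form of Proposition \ref{INW_supersoluble}) forces every non-central element to be a zero of some irreducible character, so nothing is lost by passing to the vanishing version. This is precisely why the techniques specific to zeros collapse onto the classical class-size argument of \cite{FMOsquare} in this setting.
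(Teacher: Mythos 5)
Your proof is correct and is essentially the paper's own argument: the paper likewise observes that by Proposition \ref{INW_supersoluble} the non-vanishing elements of a $p$-group lie in its centre, and then applies \cite[Theorem A]{FMOsquare} directly. Your write-up merely makes explicit the trivial case $x\in\ze{P}$, which the paper leaves implicit.
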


\begin{proof}
Since the non-vanishing elements of a $p$-group lie in its centre because of Proposition \ref{INW_supersoluble}, we can apply directly \cite[Theorem A]{FMOsquare} in order to get the thesis.
\end{proof}

\medskip

The following lemma will be essential in the sequel.

\begin{lemma} \emph{\cite[Lemma 2.4]{BCL}}
\label{lemabcl}
Let $p$ be a prime, and $Q$ be a $p'$-group acting faithfully on an elementary abelian $p$-group $N$ with $\abs{[x, N]} = p$, for all $1\neq x\in Q$. Then $Q$ is cyclic.
\end{lemma}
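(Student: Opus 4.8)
The plan is to translate the hypothesis into linear algebra and then exploit the coprimality via the determinant. First I would identify $N$ with an $\mathbb{F}_p$-vector space of dimension $n$, so that the faithful action of $Q$ yields an embedding $Q \hookrightarrow \op{GL}(N)$; write $T_x$ for the linear map induced by $x \in Q$. Writing $N$ additively, one has $[x,n] = (T_x - 1)(n)$ for $n \in N$, so $[x,N]$ is exactly the image of $T_x - 1$, a subgroup of $N$ of order $p^{\op{rank}(T_x-1)}$. Hence the hypothesis $\abs{[x,N]} = p$ for all $1 \neq x \in Q$ says precisely that $\op{rank}(T_x - 1) = 1$ for every nontrivial $x$; equivalently $\ce{N}{x} = \ker(T_x - 1)$ is a hyperplane of $N$ (in particular the action is automatically faithful, since a nontrivial $x$ with $T_x = 1$ would give $\op{rank}(T_x-1)=0$).

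Next I would pin down the eigenvalues of each $T_x$. Since $Q$ is a $p'$-group, every $x$ has order prime to $p$, so $T_x$ satisfies a polynomial with pairwise distinct roots over $\overline{\mathbb{F}}_p$ and is therefore diagonalisable over $\overline{\mathbb{F}}_p$. Combined with $\op{rank}(T_x - 1) = 1$, the eigenvalue $1$ occurs with multiplicity exactly $n-1$ and there is a single remaining eigenvalue $\lambda_x$; moreover $\lambda_x \neq 1$, for otherwise $T_x$ would be the identity, contradicting $\abs{[x,N]} = p$.

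The conclusion is then immediate: $\det(T_x) = 1^{\,n-1}\lambda_x = \lambda_x \neq 1$ for every $1 \neq x \in Q$. Thus the group homomorphism $Q \to \mathbb{F}_p^{\times}$ given by $x \mapsto \det(T_x)$ has trivial kernel, so it embeds $Q$ into the cyclic group $\mathbb{F}_p^{\times}$ of order $p-1$. Therefore $Q$ is cyclic (and, as a by-product, $\abs{Q}$ divides $p-1$).

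The step that must be handled carefully — and the one that could tempt one into unnecessary machinery — is the control of eigenvalue multiplicities: over a field of characteristic $p$ a rank-one perturbation of the identity need not be semisimple, so the rank alone does not determine the spectrum. It is exactly the coprimality assumption ($Q$ a $p'$-group) that forces diagonalisability, and hence that $T_x - 1$ of rank one produces a single nontrivial eigenvalue equal to $\det(T_x)$. Recognising that this turns $\det$ into an injective linear character of $Q$ is what lets the argument bypass any appeal to irreducibility, Clifford theory, or descent to an absolutely irreducible constituent.
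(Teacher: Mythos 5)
Your proof is correct. Note first that the paper itself offers no proof of this statement: it is quoted verbatim from \cite[Lemma 2.4]{BCL} and used as a black box in the proof of Theorem \ref{theoremp2}, so the only comparison available is with that external source rather than with anything internal to the paper. As a self-contained argument, yours checks out at every step: writing $N$ additively, $[x,N]=\op{Im}(T_x-1)$ is indeed a subspace of order $p^{\op{rank}(T_x-1)}$, so the hypothesis says $\op{rank}(T_x-1)=1$ for all $1\neq x\in Q$; since $x$ has order $m$ coprime to $p$, the polynomial $X^m-1$ is separable over $\mathbb{F}_p$, so $T_x$ is diagonalisable over $\overline{\mathbb{F}}_p$, and because rank is invariant under field extension the spectrum of $T_x$ is $1$ with multiplicity $n-1$ together with a single $\lambda_x\neq 1$ (were $\lambda_x=1$, diagonalisability would force $T_x=1$). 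Your observation that $\lambda_x=\det(T_x)$ quietly also settles rationality: the determinant lies in $\mathbb{F}_p^{\times}$, so no descent argument is needed. Hence $\det\colon Q\to\mathbb{F}_p^{\times}$ is a homomorphism with trivial kernel, embedding $Q$ in a cyclic group of order $p-1$; this proves the lemma and gives the extra conclusion $\abs{Q}$ divides $p-1$, which is stronger than cyclicity alone. You also correctly identify the genuinely delicate point, namely that in characteristic $p$ a rank-one perturbation of the identity (e.g.\ a transvection) need not be semisimple, so the $p'$-hypothesis is exactly what makes the rank condition control the spectrum; your determinant argument bypasses any reduction to irreducible $Q$-submodules of $N$, which is the more obvious (Maschke-style) route one might otherwise take.
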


In \cite{B}, the author posed the following question: a group such that all its vanishing indices are not divisible by $p^2$, for a prime satisfying $(p-1, \abs{G})=1$, must be $p$-nilpotent? The following theorem gives a positive answer to this question, even for some factorised groups (see Corollary \ref{corollary_pnilp} for the case $G=A=B$).

\begin{theorem}
\label{theoremp2}
Let $G=AB$ be a core-factorisation, and let $p$ be a prime such that $(p-1, \abs{G})=1$. Suppose that $i_G(x)$ is not divisible by $p^2$ for every prime power order element $x\in A\cup B$ vanishing in $G$. It follows that:
\begin{enumerate}
	\item[\emph{(1)}] $G$ is soluble.
	
	\item[\emph{(2)}] $G$ is $p$-nilpotent.
	
	\item[\emph{(3)}] If $P\in\syl{p}{G}$, then $P'\leqslant \fra{P}\leqslant\ze{P}$, $P'$ is elementary abelian and $\abs{P'}\leq p^2$.
\end{enumerate} 
\end{theorem}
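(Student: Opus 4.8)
The plan is to prove the three assertions in turn, with (2) relying on (1) and (3) relying on (2), so that the bulk of the work concentrates on establishing solubility.

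For (1), I would argue by contradiction with $G$ a counterexample of minimal order. Since the hypothesis passes to quotients (the indices only decrease, by the preceding remark, while vanishing prime power order elements lift through the natural character correspondence as in Remark \ref{remarkhyp}), and since quotients of core-factorisations are again core-factorisations (Lemma \ref{lemacore}), minimality forces every proper quotient of $G$ to be soluble. The core-factorisation hypothesis provides a minimal normal subgroup $N$ contained in, say, $A$. If $N$ were abelian, then solubility of $G/N$ would make $G$ soluble; hence $N=S_1\times\cdots\times S_k$ is non-abelian with $S_i\cong S$ a non-abelian simple group. The decisive step is then to exhibit a prime power order element $x\in N\subseteq A$ which is vanishing in $G$ and satisfies $p^2\mid i_G(x)$, directly contradicting the hypothesis. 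When $p\mid\abs{N}$, such an element can be sought among the $p$-elements furnished by Proposition \ref{reduction_lemma} (via Proposition \ref{simplegroups}), controlling the $p$-part of $i_N(x)$, which divides $i_G(x)$; when $p\nmid\abs{N}$, one instead produces a suitable $p$-regular vanishing element of prime power order whose class size in $G$ is divisible by $p^2$. This production is where the classification of finite simple groups enters, and it is the main obstacle of the whole argument: I would adapt the simple-group computations of \cite{B} and \cite{BK} (whose Theorem A in \cite{B} is precisely the non-factorised solubility statement) to the present factorised setting.

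Granting (1), $G$ is soluble, and I would deduce (2) by reducing to the hypothesis of Theorem \ref{theorem_p-reg}(1). First pass to $\overline{G}:=G/\rad{p'}{G}$: it is again a core-factorisation inheriting the hypothesis, it satisfies $\rad{p'}{\overline{G}}=1$, and proving it $p$-nilpotent makes $\overline{G}$ a $p$-group, whence $G$ is $p$-nilpotent. So I may assume $\rad{p'}{G}=1$, and then $p$-solubility gives $\ce{G}{\rad{p}{G}}\leqslant\rad{p}{G}$. Now suppose, towards the hypothesis of Theorem \ref{theorem_p-reg}(1), that some $p$-regular element $x\in A\cup B$ of prime power order, vanishing in $G$, had $p\mid i_G(x)$. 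Being a non-trivial $p'$-element, $x$ cannot centralise the $p$-group $\rad{p}{G}$, so it acts non-trivially on some elementary abelian $p$-chief factor $V/W$; the $p$-power $\abs{[V/W,x]}=\abs{V/W:\ce{V/W}{x}}$ divides $i_{G/W}(xW)$ and hence $i_G(x)$, so the assumption $p^2\nmid i_G(x)$ forces $\abs{[V/W,x]}=p$. But then $x$ would act non-trivially by coprime action on a group of order $p$, that is, as a non-trivial element of order dividing $(p-1,\abs{G})=1$, which is absurd (cf.\ Lemma \ref{lemabcl}). Hence no such $x$ exists, Theorem \ref{theorem_p-reg}(1) applies, and $G$ is $p$-nilpotent.

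Finally, for (3) I would use (2): let $H$ be the unique normal Hall $p'$-subgroup of $G$, so that $\widetilde{G}:=G/H\cong P$ is a $p$-group and, by Lemma \ref{lemacore}, $\widetilde{G}=(AH/H)(BH/H)$ is a core-factorisation. The hypothesis transfers to $\widetilde{G}$ (indices divide and vanishing lifts), so $\widetilde{G}$ is a $p$-group in which $p^2$ divides no index of a vanishing element of the two factors; Proposition \ref{knoche_prop} then yields $P'\leqslant\fra{P}\leqslant\ze{P}$ with $P'$ elementary abelian of order at most $p^2$. As indicated, the genuinely hard point is the non-abelian case in (1): everything else is a reduction feeding into the earlier results of the paper, whereas extracting a vanishing prime power order element of index divisible by $p^2$ from a non-abelian minimal normal subgroup is exactly where the CFSG-based analysis of zeros in simple groups seems unavoidable.
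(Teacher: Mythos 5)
Your parts (1) and (3) track the paper's own proof, while your part (2) takes a genuinely different (and correct) route, so let me address both aspects. In (1) the paper, like you, reduces to a non-soluble minimal normal subgroup $N$ inside a factor and then ``reproduces the arguments in the proof of [BK, Theorem 3.1]'' to extract a vanishing prime power order element of $N$ with class size divisible by $p^2$ --- so you outsource exactly the step the paper outsources. But you miss the observation that makes this clean: since $2$ divides $p-1$ for odd $p$, the hypothesis $(p-1,\abs{G})=1$ forces $\abs{G}$ odd unless $p=2$, so by Feit--Thompson one may assume $p=2$ from the start and hunt for an element with $4\mid i_G(x)$. Your proposed general-$p$ case split ($p\mid\abs{N}$ versus $p\nmid\abs{N}$) is a detour, and the specific tool you suggest would not carry it: Proposition \ref{reduction_lemma} produces vanishing $p$-elements but gives no control whatsoever on their indices, so it cannot by itself deliver $p^2\mid i_G(x)$. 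The adaptation of \cite{B, BK} that you defer to is available precisely because of the reduction to $p=2$; without noting that reduction, the ``main obstacle'' you describe is harder than it needs to be, though this is a soft spot in the write-up rather than a fatal gap.

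Your (2) is shorter and more elementary than the paper's. The paper runs a minimal-counterexample argument: with $\rad{p'}{G}=1$ and $p$-nilpotency a saturated formation, it obtains a unique minimal normal $N=\rad{p}{G}=\fit{G}=\ce{G}{N}$ inside a factor, takes a minimal normal $K/N$ of $G/N$ covered by a factor, shows via Lemma \ref{contradiction_lemma} that every element of $K\smallsetminus N$ vanishes in $G$, deduces $\abs{[N,x]}=p$ for the relevant $q$-elements, invokes Lemma \ref{lemabcl} to make the complement cyclic, and finally forces $\abs{N}=p$ and $G/N\hookrightarrow\op{Aut}(N)\cong C_{p-1}$, contradicting $(p-1,\abs{G})=1$. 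You instead verify the hypothesis of Theorem \ref{theorem_p-reg}(1) directly: after reducing to $\rad{p'}{G}=1$, solubility from (1) gives $\ce{G}{\rad{p}{G}}\leqslant\rad{p}{G}$, so any non-trivial $p$-regular vanishing element of prime power order in $A\cup B$ acts non-trivially on some $p$-chief factor $V/W$; the divisibility $\abs{V/W:\ce{V/W}{x}}\mid i_{G/W}(xW)\mid i_G(x)$ together with coprimality and $p^2\nmid i_G(x)$ pins $\abs{[V/W,x]}=p$, and then $(p-1,\abs{G})=1$ kills the resulting fixed-point-free action on a group of order $p$. This is sound (the stabilisation argument for coprime action on a series of $\rad{p}{G}$ justifies the existence of such a chief factor), and it in fact shows more: after the reduction there are \emph{no} vanishing $p$-regular prime power order elements in the factors at all, so Corollary \ref{corollaryHall} would finish equally well. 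Both proofs hinge on the same mechanism --- a $p'$-element acting with commutator of order exactly $p$ on an elementary abelian section is impossible when $(p-1,\abs{G})=1$ --- but you apply it chief-factor-wise, avoiding Lemmas \ref{contradiction_lemma} and \ref{lemabcl} entirely, at the mild cost of leaning on Theorem \ref{theorem_p-reg}(1) (whose proof is CFSG-dependent) where the paper's argument at this stage is self-contained modulo standard lemmas. Your (3) coincides with the paper: pass to $G/H\cong P$, which inherits the hypotheses as a factorised $p$-group, and apply Proposition \ref{knoche_prop}.
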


\begin{proof}
(1) Suppose that the result is false and let $G$ be a counterexample of minimal order. Since every group of odd order is soluble, we may assume that $p=2$ because $(p-1, \abs{G})=1$. The class of soluble groups is a saturated formation, so we can suppose that there exists a unique minimal normal subgroup $N$. Moreover, $N$ is non-soluble. We have for instance $N\leqslant A$, because $G=AB$ is a core-factorisation. Then it is enough to reproduce the arguments in the proof of \cite[Theorem 3.1]{BK} to obtain a prime power order element in $N\leqslant A$ which is vanishing in $G$ and whose conjugacy class size is divisible by $4$, a contradiction.

(2) Assume that the result is not true and let $G$ be a counterexample of least possible order. By the minimality of $G$ we may suppose that $\rad{p'}{G}=1$. Let $N$ be a minimal normal subgroup of $G$. Thus $p$ divides its order and, since $G$ is soluble by (1), then $N$ is $p$-elementary abelian. Moreover, the class of $p$-nilpotent groups is a saturated formation, so $N$ is the unique minimal normal subgroup of $G$ and by \cite[A - 15.6, 15.8]{DH} we get $N=\rad{p}{G}=\fit{G}=\ce{G}{N}$. We can consider $N\leqslant A$, for instance. We take $K/N$ a minimal normal subgroup of $G/N$ such that it is covered by either $A$ or $B$. We claim that each element in $K\smallsetminus N$ is vanishing in $G$. Since $N=\rad{p}{G}$, then $K/N$ is $q$-elementary abelian for some prime $q\neq p$. Indeed, we get $\ce{K}{N}\leqslant\ce{G}{N}=N$. It follows by Lemma \ref{contradiction_lemma} that every element in $K\smallsetminus N$ is vanishing in $G$.

Note that $K=[N]Q$ where $Q\in\syl{q}{K}$ is elementary abelian. If we take $1\neq xN\in K/N$, then we can assume that $x\in K\smallsetminus N$ is a $q$-element in $A\cup B$ by conjugation. Hence $p^2$ does not divide $i_G(x)=\abs{G:\ce{G}{x}}$. Note that the $p$-number $1\neq \abs{N:\ce{N}{x}}$ divides $i_G(x)$. On the other hand, $x$ acts coprimely on $N$, which is abelian, so $N=\ce{N}{x} \times [N, x]$. It follows $\abs{[N, x]}=p$. Observe that $\ce{Q}{N}=Q\cap \ce{G}{N}=1$, so $Q$ acts faithfully and coprimely on $N$. Further, if $1\neq y\in Q$, then $y\in K\smallsetminus N$ and by the previous argument we get $\abs{[N, y]}=p$. Now Lemma \ref{lemabcl} leads to the fact that $Q$ is cyclic, so $\abs{K/N}=q$ and $K=N\langle x\rangle$. Hence $\ce{N}{x}=\ce{N}{K}$ is normal in $G$. Since $\ce{N}{x}<N$, by the minimality of $N$ we obtain $\ce{N}{x}=1$ and so $N=[N, x]$ has order $p$. Now $G/N=\no{G}{N}/\ce{G}{N}$ is isomorphich to a subgroup of $\op{Aut}(N)$, which is isomorphic to $C_{p-1}$. It follows that $\abs{G/N}$ divides both $p-1$ and $\abs{G}$, the final contradiction.

(3) Notice that $P\in\syl{p}{G}$ is isomorphic to $G/\rad{p'}{G}$ by the previous assertion. Hence the result follows by Proposition \ref{knoche_prop}.
\end{proof}

\begin{corollary}
\label{corollary_pnilp}
Let $G$ be a group, and let $p$ be a prime such that $(p-1, \abs{G})=1$. Assume that $p^2$ does not divide $i_G(x)$ for each prime power order element $x$ vanishing in $G$. Then $G$ is a soluble $p$-nilpotent group. Moreover, if $P\in \syl{p}{G}$, then $P'\leqslant \fra{P}\leqslant\ze{P}$, $P'$ is elementary abelian and $\abs{P'}\leq p^2$.
\end{corollary}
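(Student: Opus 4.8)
The plan is to recognise this corollary as the special case of Theorem \ref{theoremp2} obtained by taking the trivial factorisation $G = A = B$. We may clearly assume $G \neq 1$, since otherwise every assertion is vacuous. By the first item of the Remark following Definition \ref{definition_core}, the decomposition $G = GG$ is always a core-factorisation, so the hypothesis of Theorem \ref{theoremp2} on the factors is automatically satisfied for this choice.

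The only point to check is that the arithmetical assumption of the corollary matches that of the theorem. Since $A \cup B = G$ when $A = B = G$, the clause ``every prime power order element $x \in A \cup B$ vanishing in $G$'' reduces to ``every prime power order element $x$ vanishing in $G$'', and the requirement that $p^2$ not divide $i_G(x)$ is exactly what we are given; the coprimality hypothesis $(p-1, \abs{G}) = 1$ is common to both statements. Having aligned the hypotheses, I would simply invoke Theorem \ref{theoremp2}: its part (1) delivers the solubility of $G$, part (2) the $p$-nilpotency, and part (3) the structural description $P' \leqslant \fra{P} \leqslant \ze{P}$ with $P'$ elementary abelian and $\abs{P'} \leq p^2$ for $P \in \syl{p}{G}$. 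This is precisely the conclusion sought.

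Accordingly, there is no genuine obstacle at this stage: the entire substance of the argument has already been carried out in the proof of Theorem \ref{theoremp2} (via the solubility step borrowed from \cite[Theorem 3.1]{BK}, the coprime action argument routed through Lemma \ref{lemabcl}, and the final reduction to Proposition \ref{knoche_prop}). The value of recording this special case separately is that it settles, in the affirmative, the question raised by Brough in \cite{B}. In fact it does so in a strengthened form: Brough's hypothesis constrains all vanishing indices, whereas here we only constrain the vanishing indices of elements of prime power order, so the present conclusion is obtained under a weaker assumption.
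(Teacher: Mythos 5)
Your proposal is correct and matches the paper's intent exactly: Corollary \ref{corollary_pnilp} is stated there without separate proof precisely because it is the specialisation $G=A=B$ of Theorem \ref{theoremp2}, which is a core-factorisation by the first item of the remark following Definition \ref{definition_core}. Your hypothesis-matching and the trivial-group caveat are both sound, so nothing further is needed.
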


In \cite[Theorem B (c)]{FMOsquare} it is proved the following: ``Let $G=AB$ be the product of the mutually permutable subgroups $A$ and $B$. Let $p$ be a fixed prime satisfying $(p-1, \abs{G})=1$. If all $p$-regular prime power order elements in $A\cup B$ have $i_G(x)$ not divisible by $p^2$, then $G/\rad{p}{G}$ has elementary abelian Sylow $p$-subroups''. We point out that this property does not remain true under the hypotheses of Theorem \ref{theoremp2}, as the following example shows:

\begin{example}
Let $G=[A]B$ be the semidirect product of a cyclic group $B$ of order $4$ which acts transitively on a cyclic group $A$ of order $5$. Let the prime $p=2$. Then $G=AB$ is a core-factorisation, and all the vanishing elements of $G$ (not only those lying in $A\cup B$) have index not divisible by $4$. However, $\rad{2}{G}=1$ and $G/\rad{2}{G}$ does not have elementary abelian Sylow $2$-subgroups.
\end{example}

We highlight that the arguments used in \cite[Theorem C]{FMOsquare} can be generalised in order to obtain the following more general result for core-factorisations.

\begin{theorem}
\label{p-sol_vanishing}
Let $G=AB$ be a core-factorisation, and let $p$ be a prime. Suppose that for every prime power order $p$-regular element $x\in A\cup B$ vanishing in $G$, $i_G(x)$ is not divisible by $p^2$. If $G$ is $p$-soluble, then $G$ is $p$-supersoluble.
\end{theorem}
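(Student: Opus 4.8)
The plan is to proceed by induction on $\abs{G}$, exploiting the fact that $p$-supersolubility is a saturated formation so that a minimal counterexample has a unique minimal normal subgroup $N$, and that the hypotheses are inherited by quotients of core-factorisations (as recorded in the Remarks). First I would reduce to the case $\rad{p'}{G}=1$: if $1\neq\rad{p'}{G}$ then the quotient $G/\rad{p'}{G}$ is again a $p$-soluble core-factorisation satisfying the hypotheses, hence $p$-supersoluble by minimality, and since $p$-supersolubility is a formation detecting structure above a $p'$-normal subgroup one recovers the property for $G$. Thus I may assume $N$ is the unique minimal normal subgroup and, using $p$-solubility together with $\rad{p'}{G}=1$, that $N=\rad{p}{G}=\fit{G}=\ce{G}{N}$ is a $p$-elementary abelian group (via \cite[A - 15.6, 15.8]{DH}, exactly as in the proof of Theorem \ref{theoremp2}(2)).

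The heart of the argument is then to show $\abs{N}=p$, which forces $G/N\hookrightarrow\op{Aut}(N)\cong C_{p-1}$ to be cyclic of order prime to $p$ and yields $p$-supersolubility of $G$ directly (a chief series of $G$ below $N$ is trivial since $\abs{N}=p$, and above $N$ the factors are cyclic). Since $N\leqslant A$ or $N\leqslant B$ by the core-factorisation property, I would pick a minimal normal subgroup $K/N$ of $G/N$ covered by either $A$ or $B$, and argue exactly as in Theorem \ref{theoremp2}(2) that every element of $K\smallsetminus N$ is vanishing in $G$: indeed $K/N$ is $q$-elementary abelian for some prime $q\neq p$ (as $N=\rad{p}{G}$), and $\ce{K}{N}\leqslant\ce{G}{N}=N$, so Lemma \ref{contradiction_lemma} applies. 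Writing $K=[N]Q$ with $Q\in\syl{q}{K}$ elementary abelian acting faithfully and coprimely on $N$, I would use the hypothesis on vanishing $p$-regular elements ($q$-elements are $p$-regular, and lie in $A\cup B$ up to conjugacy) together with $N=\ce{N}{x}\times[N,x]$ to deduce $\abs{[N,y]}=p$ for every $1\neq y\in Q$. Lemma \ref{lemabcl} then gives that $Q$ is cyclic, whence $\abs{K/N}=q$, and the standard argument $\ce{N}{x}=\ce{N}{K}\lhd G$ combined with minimality of $N$ forces $\ce{N}{x}=1$, so $N=[N,x]$ has order $p$.

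The main obstacle I anticipate is handling the interplay between the square-free hypothesis (which here only controls $p$-regular prime power order vanishing elements, not all of them) and the coprime action, specifically ensuring the $q$-elements of $Q$ are genuinely captured by the hypothesis. The delicate point is that the hypothesis speaks about elements in $A\cup B$, and although $K$ is prefactorised by Lemma \ref{core_charac}, I must verify that a suitable $q$-element representative of each nontrivial coset of $K/N$ can be taken inside $A\cup B$ (after $G$-conjugation, which preserves the vanishing property and the index); this is precisely where the covering $K/N\leqslant XN/N$ for $X\in\{A,B\}$ is used. Once $\abs{[N,y]}=p$ is established uniformly, the application of Lemma \ref{lemabcl} and the reduction to $\abs{N}=p$ are routine, mirroring \cite[Theorem C]{FMOsquare} and the final paragraph of Theorem \ref{theoremp2}(2), so I would simply cite those computations rather than repeat them in full.
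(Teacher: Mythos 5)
There is a genuine gap, and it sits exactly at the point your proposal identifies as "the heart of the argument." You claim that the minimal normal subgroup $K/N$ of $G/N$ is $q$-elementary abelian for some prime $q\neq p$ "as $N=\rad{p}{G}$", and you then invoke Lemma \ref{contradiction_lemma} (which \emph{requires} the factor $M/N$ to be abelian) to conclude that every element of $K\smallsetminus N$ is vanishing in $G$. But this inference is borrowed from Theorem \ref{theoremp2}(2), where it is legitimate only because $G$ had already been proved \emph{soluble} in part (1), so that all chief factors are elementary abelian and the condition $N=\rad{p}{G}$ merely rules out $q=p$. In the present theorem $G$ is only assumed $p$-soluble: the chief factor $K/N$ above $N=\rad{p}{G}$ is indeed a $p'$-group, but it may perfectly well be non-abelian (a direct product of non-abelian simple $p'$-groups; think of $G=[N]H$ with $N$ an elementary abelian $p$-group and $H$ a $p'$-group having a non-abelian simple minimal normal subgroup acting faithfully on $N$). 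In that case Lemma \ref{contradiction_lemma} does not apply, no vanishing elements of $K\smallsetminus N$ are produced, the decomposition $K=[N]Q$ with $Q$ elementary abelian fails, and the application of Lemma \ref{lemabcl} collapses; your proof only covers the case where the relevant chief factor happens to be abelian.

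This is precisely the obstruction the paper's proof is designed to avoid: instead of Lemma \ref{contradiction_lemma}, it uses Lemma \ref{contradiction_lemma2} (\cite[Corollary 4.4]{BBCP}), which asserts that every element of $M\smallsetminus N$ is vanishing in $G$ for \emph{any} chief factor $M/N$ with $(\abs{N},\abs{M/N})=1$ and $N=\ce{M}{N}$, with no abelianness assumption on $M/N$. With that lemma, the minimal normal subgroup $Z/N$ of $G/N$ covered by $A$ or $B$ — abelian or not — consists of vanishing elements outside $N$, the arithmetical hypothesis on $p$-regular prime power order elements becomes available, and the rest of the argument proceeds as in \cite[Theorem C]{FMOsquare}. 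So the repair is not cosmetic: you must replace your key lemma by Lemma \ref{contradiction_lemma2} (and follow the structure of \cite[Theorem C]{FMOsquare} rather than that of Theorem \ref{theoremp2}(2), whose final computation also leans on the abelian case), after which your reduction to $\rad{p'}{G}=1$, the identification $N=\rad{p}{G}=\fit{G}=\ce{G}{N}$ via Hall--Higman, and the prefactorisation/conjugation bookkeeping you describe are all sound.
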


\begin{proof}
It is sufficient to follow the proof of \cite[Theorem C]{FMOsquare}. Notice that, in this case, we can use Lemma \ref{contradiction_lemma2} with a minimal normal subgroup $Z/N$ of $G/N$ such that it lies in either $AN/N$ or $BN/N$. Thus, we can affirm that every element in $Z\smallsetminus N$ is vanishing in $G$, in order to apply the assumption that $i_G(x)$ is not divisible by $p^2$ for every prime power order $p$-regular element $x\in Z\smallsetminus N$. 
\end{proof}

\medskip

When we consider square-free indices for all primes in $\pi(G)$, we get:

\begin{theorem}
\label{theoremSquare}
Let $G=AB$ be a core-factorisation. Suppose that $i_G(x)$ is square-free for every prime power order element $x\in A\cup B$ vanishing in $G$. Then:
\begin{enumerate}
	\item[\emph{(1)}] $G$ is supersoluble.
	
	\item[\emph{(2)}] $G'$ is abelian.
	
	\item[\emph{(3)}] $G'$ has elementary abelian Sylow subgroups.
	
	\item[\emph{(4)}] $\fit{G}'$ has Sylow $p$-subgroups of order at most $p^2$, for each prime $p$.
\end{enumerate} 
\end{theorem}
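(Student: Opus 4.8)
The plan is to obtain (1) from the two preceding theorems of this section and then to read off the finer structure of $G'$ and $\fit{G}$ by pushing the $p$-group description of Proposition \ref{knoche_prop} into $G$ itself. First I would settle supersolubility. Since $(2-1,\abs{G})=1$ and square-freeness forces $4$ not to divide $i_G(x)$ for every prime power order $x\in A\cup B$ vanishing in $G$, Theorem \ref{theoremp2}(1) applied with $p=2$ shows that $G$ is soluble, hence $p$-soluble for every prime $p$. Fixing any prime $p$, square-freeness also gives that $p^2$ does not divide $i_G(x)$ for every $p$-regular prime power order $x\in A\cup B$ vanishing in $G$, so Theorem \ref{p-sol_vanishing} yields that $G$ is $p$-supersoluble. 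As this holds for every $p$ and $G$ is soluble, each chief factor of $G$ is cyclic; that is, $G$ is supersoluble, which is (1).

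For (2)--(4) I would use that a supersoluble group has $G'\leqslant\fit{G}$ with $\fit{G}$, and hence $G'$, nilpotent; both therefore split as the direct product of their Sylow subgroups, and it is enough to fix a prime $p$ and analyse $R:=\rad{p}{G}$ together with $G'\cap R$, which is the Sylow $p$-subgroup of $G'$. Here Proposition \ref{lemma_non_brough} is the pivot: any $x\in R$ with $p\nmid i_G(x)$ is non-vanishing in $G$, so a vanishing element of $R$ has $p\mid i_G(x)$, and by square-freeness the $p$-part of $i_G(x)$ equals $p$. The target is to recover the conclusion of Proposition \ref{knoche_prop} for $R$, namely $R'\leqslant\fra R\leqslant\ze R$ with $R'$ elementary abelian and $\abs{R'}\leq p^2$ (which gives (4)), and to show in addition that $G'\cap R$ is elementary abelian (which then gives (2), since the Sylow subgroups of the nilpotent group $G'$ commute, and (3)). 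Concretely, one would produce a vanishing prime power order element of $R$ lying in $A\cup B$, turn the bound ``$p^2\nmid i_G(x)$'' into a commutator bound $\abs{[R,x]}\leq p$ exactly as in the proof of Theorem \ref{theoremp2}(2) for the coprime interplay and as in \cite[Theorem A]{FMOsquare} for the internal $p$-structure, and then assemble these local estimates following \cite[Theorem B]{FMOsquare}.

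The main obstacle is that $R=\rad{p}{G}$, and likewise the Sylow subgroups of $G'$ and $\fit{G}$, need not be prefactorised, and even when they are they need not inherit the core-factorisation property, as the paragraph following Lemma \ref{core_charac} and Example \ref{tcc-core} show; hence Proposition \ref{knoche_prop} cannot be applied to $R$ as a black box. The remedy is never to factorise $R$ directly: instead one produces the vanishing prime power order elements to which the hypothesis is applied inside prefactorised chief factors of $G$ contained in $R$, invoking Lemma \ref{contradiction_lemma} and Lemma \ref{contradiction_lemma2} to certify that entire cosets are vanishing in $G$ and Lemma \ref{core_charac} to intersect the relevant sections with $A$ or with $B$, and then one runs the commutator estimates inside $G$ rather than inside $R$. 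This is the delicate point, and it is exactly where the vanishing argument departs from the class-size proof of \cite{FMOsquare}, since --- as Example \ref{example_van} warns --- vanishing in $G$ is not the same as vanishing in a subgroup.
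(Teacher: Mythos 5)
Your treatment of (1) coincides with the paper's: solubility via Theorem \ref{theoremp2}(1) (the paper takes $p$ the smallest prime divisor of $\abs{G}$; your choice $p=2$ is equally valid since $(2-1,\abs{G})=1$ always holds), then $p$-supersolubility for every prime via Theorem \ref{p-sol_vanishing}.

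For (2)--(4), however, your proposed remedy for the non-prefactorisation of $R=\rad{p}{G}$ has a genuine gap. Lemmas \ref{contradiction_lemma} and \ref{contradiction_lemma2} certify vanishing only for elements of $M\smallsetminus N$ where the chief factor $M/N$ has order \emph{coprime} to an abelian minimal normal subgroup $K$ with $N=\ce{M}{K}$ (respectively $\ce{M}{N}\leqslant N$). After the standard reduction in this problem --- $\rad{p'}{G}=1$ and $\fit{G}=\rad{p}{G}=P\in\syl{p}{G}$ --- every minimal normal subgroup of the soluble group $G$ is a $p$-group, so these lemmas can only ever certify vanishing of elements lying over $p'$-chief factors; they can \emph{never} certify vanishing of $p$-elements of $P$, which is precisely what you must feed into the square-free hypothesis to bound $P'$, $R'$ and $G'\cap R$. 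Worse, when $G$ is nilpotent (in particular when $G=AB$ is a factorised $p$-group, where (2)--(4) already have nontrivial content), every minimal normal subgroup is central, $\ce{M}{K}=M$, and both lemmas are vacuous, so your mechanism produces no vanishing elements at all exactly where the conclusions bite.

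The missing idea, and the pivot of the paper's argument, is Proposition \ref{INW_supersoluble}: in the (now proved) supersoluble group $G$, every element of $G\smallsetminus\ze{\fit{G}}$ is vanishing in $G$. With $\fit{G}=P$ this converts the vanishing hypothesis into the unconditional class-size hypothesis of \cite[Theorem D]{FMOsquare} on \emph{all} $q$-elements ($q\neq p$) of $A\cup B$, and it is also the entire content of Proposition \ref{knoche_prop} (non-vanishing elements of a $p$-group are central). Note that Proposition \ref{knoche_prop}, contrary to your concern, requires only an \emph{arbitrary} factorisation of a $p$-group --- no core condition --- and the paper applies it not to $R$ but to prefactorised normal subgroups $N=(N\cap A)(N\cap B)$ containing $P$ as a Sylow subgroup, after checking inheritance of the hypotheses via Proposition \ref{lemma_non_brough}: a $p$-element of $P$ vanishing in $N$ but non-vanishing in $G$ would lie in $\ze{P}$ and hence be non-vanishing in $N$, a contradiction. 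Indeed, the paper proves (2)--(4) as a separate theorem valid for an arbitrary factorisation of a supersoluble group, following \cite[Theorem D]{FMOsquare} (not Theorem B, which you cite). Since your outline never invokes Proposition \ref{INW_supersoluble} in parts (2)--(4), the square-free hypothesis never reaches the $p$-elements of $P$, and the commutator estimates you plan to run inside $G$ cannot get started.
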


The statements (2-3-4) above are immediate consequences of the next more general result for an arbitrary factorisation of a supersoluble group.

\begin{theorem}
Let $G=AB$ be the product of the subgroups $A$ and $B$, and assume that $G$ is supersoluble. Suppose that $i_G(x)$ is square-free for every prime power order element $x\in A\cup B$ vanishing in $G$. Then:
\begin{enumerate}
	\item[\emph{(1)}] $G'$ is abelian.
	
	\item[\emph{(2)}] $G'$ has elementary abelian Sylow subgroups.
	
	\item[\emph{(3)}] $\fit{G}'$ has Sylow $p$-subgroups of order at most $p^2$, for each prime $p$.
\end{enumerate} 
\end{theorem}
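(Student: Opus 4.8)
The guiding idea is that supersolubility, through Proposition \ref{INW_supersoluble}, converts the vanishing hypothesis into a purely arithmetical one on centraliser indices, after which the argument can follow the lines of \cite[Theorem B]{FMOsquare}. Since $G$ is supersoluble, every $g\in G\smallsetminus\ze{\fit{G}}$ is vanishing in $G$; hence for each prime power order element $x\in A\cup B$ with $x\notin\ze{\fit{G}}$ the index $i_G(x)$ is square-free, so its $q$-part is at most $q$ for every prime $q$. The only elements of $A\cup B$ escaping the hypothesis lie inside $\ze{\fit{G}}$, but as $G$ is soluble we have $\ze{\fit{G}}=\ce{G}{\fit{G}}$, so such elements centralise $\fit{G}$ and therefore act trivially on $G'\leqslant\fit{G}$ and on $\fit{G}'$; they will accordingly play no role in the commutator computations. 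Finally, $G'$ is nilpotent and $\fit{G}'=[\fit{G},\fit{G}]\leqslant G'$, so writing $V_p:=G'\cap\rad{p}{G}$ and $W_p:=\rad{p}{G}'\leqslant V_p$ for the Sylow $p$-subgroups of $G'$ and of $\fit{G}'$, the three assertions amount to: each $V_p$ is abelian (giving (1), since $G'$ is the direct product of the $V_p$), each $V_p$ is elementary abelian (giving (2)), and $\abs{W_p}\leq p^2$ (giving (3)).

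For (1) and (2) I would run a coprime action argument prime by prime. For a prime power order $p'$-element $y\in A\cup B$ with $y\notin\ze{\fit{G}}$, the number $\abs{V_p:\ce{V_p}{y}}$ is a $p$-power dividing the square-free integer $i_G(y)$, hence is at most $p$; once $V_p$ is known to be abelian this reads $\abs{[V_p,y]}\leq p$ for every such $y$. Letting a Hall $p'$-subgroup of $G$ act on $N:=V_p/\fra{V_p}$ and feeding the resulting relations $\abs{[y,N]}=p$ into Lemma \ref{lemabcl} forces the relevant $p'$-quotients to be cyclic; combined with the index bound this controls both the class and the exponent of $V_p$, yielding that $V_p$ is elementary abelian. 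The abelianness used as input is extracted from the same centraliser bound, exactly as in \cite[Theorem B]{FMOsquare}.

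For (3) the bound $\abs{W_p}=\abs{\rad{p}{G}'}\leq p^2$ should be produced by the Knoche-type Proposition \ref{knoche_prop}, which is valid for any factorisation of a $p$-group. Suppose $\rad{p}{G}$ is prefactorised, $\rad{p}{G}=(\rad{p}{G}\cap A)(\rad{p}{G}\cap B)$. If $x$ lies in one of these two factors and is vanishing in $\rad{p}{G}$, then, as $\rad{p}{G}$ is nilpotent, Proposition \ref{INW_supersoluble} gives $x\notin\ze{\rad{p}{G}}$, whence $x\notin\ze{\fit{G}}$ (because $\ze{\fit{G}}\cap\rad{p}{G}\leqslant\ze{\rad{p}{G}}$); thus $i_G(x)$ is square-free and the $p$-power $\abs{\rad{p}{G}:\ce{\rad{p}{G}}{x}}$, which divides $i_G(x)$, is at most $p$. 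Hence $p^2$ does not divide $i_{\rad{p}{G}}(x)$ for any such $x$, and Proposition \ref{knoche_prop} delivers $\abs{\rad{p}{G}'}\leq p^2$, the two factors accounting for the square.

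The step I expect to be the main obstacle is precisely justifying that $\rad{p}{G}$ is prefactorised. For an arbitrary factorisation $G=AB$ a normal subgroup need not satisfy $N=(N\cap A)(N\cap B)$, so this is not automatic and must be derived from supersolubility; for core-factorisations it is granted by Lemma \ref{core_charac}, which is exactly how the analogous statements of Theorem \ref{theoremSquare} are deduced from this general result. Should prefactorisation of $\rad{p}{G}$ not be available, the alternative is to bound $\abs{\rad{p}{G}'}$ directly: using that the $p$-elements of $\rad{p}{G}\cap(A\cup B)$ outside $\ze{\fit{G}}$ have $\rad{p}{G}$-class of size at most $p$, one studies the alternating commutator map on $\rad{p}{G}/\ze{\rad{p}{G}}$ and shows its image has order at most $p^2$. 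Throughout, one must keep verifying that the elements of $\ze{\fit{G}}$, which are not covered by the square-free hypothesis, never enter the coprime actions or the commutator maps under analysis, a point guaranteed by $\ze{\fit{G}}=\ce{G}{\fit{G}}$.
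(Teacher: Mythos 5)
Your opening move --- using Proposition \ref{INW_supersoluble} to convert the vanishing hypothesis into an arithmetical one --- is exactly the paper's starting point, and your verification in (3) that an element of $(\rad{p}{G}\cap A)\cup(\rad{p}{G}\cap B)$ vanishing in $\rad{p}{G}$ must also vanish in $G$ (via $\ze{\fit{G}}\cap\rad{p}{G}\leqslant\ze{\rad{p}{G}}$) is a sound variant of the paper's inheritance argument, which gets the same conclusion from Proposition \ref{lemma_non_brough}. But the prefactorisation of $\rad{p}{G}$, which you yourself flag as the main obstacle, is never resolved, and both escape routes you offer fail. Lemma \ref{core_charac} is simply unavailable: this theorem assumes an \emph{arbitrary} factorisation of a supersoluble group, with no core-factorisation hypothesis; and even where that lemma applies, it only produces a prefactorised chief series with the covering property, which cannot in general be routed through $\rad{p}{G}$ (recall the paper's warning that even centres of Sylow subgroups need not be prefactorised). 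The paper circumvents the problem rather than solving it: following \cite[Theorem D]{FMOsquare}, it argues by minimal counterexample to reduce each statement to the case $\fit{G}=\rad{p}{G}=P\in\syl{p}{G}$, where prefactorisation is automatic in any factorisation (choose $P_A\in\syl{p}{A}$, $P_B\in\syl{p}{B}$ with $P_AP_B\in\syl{p}{G}$; normality of $P$ forces $P=(P\cap A)(P\cap B)$), checks that prefactorised normal subgroups $N=PH$ containing $P$ inherit the hypotheses, and only then applies Proposition \ref{knoche_prop} --- to $P$ itself, never to $\rad{p}{G}$ in a general configuration. Your fallback suggestion (``study the alternating commutator map'') is a hope, not an argument.

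For (1) and (2) the proposal has a second, independent gap: the coprime-action step would fail as described. Lemma \ref{lemabcl} requires $\abs{[x,N]}=p$ for \emph{every} nontrivial element $x$ of the acting $p'$-group $Q$, whereas your hypothesis yields this only for prime power order elements of $A\cup B$ outside $\ze{\fit{G}}$; in an arbitrary factorisation a typical element of a Hall $p'$-subgroup lies in neither factor and need not be conjugate into one, so the hypothesis of the lemma cannot be verified. (Contrast this with the proof of Theorem \ref{theoremp2}(2), where the covering property of a core-factorisation places $Q$ inside a chief factor covered by one factor, so each element of $Q$ acts on the abelian $N$ exactly like an element of that factor; nothing of the sort is available here.) Faithfulness of the action on $V_p/\fra{V_p}$ is likewise unaddressed. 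Finally, the abelianness of $V_p$ --- the actual crux of (1) --- is deferred to ``exactly as in \cite{FMOsquare}'', but the arguments there run through the reduction $\fit{G}=\rad{p}{G}=P$ and the inheritance to prefactorised $N=PH$, i.e.\ through precisely the machinery your direct, prime-by-prime scheme discards. As written, the proposal therefore establishes none of (1)--(3); the workable skeleton is the paper's reduction-plus-inheritance route into \cite[Theorem D]{FMOsquare}, with Proposition \ref{knoche_prop} replacing \cite[Theorem A]{FMOsquare}.
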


\begin{proof}
We adapt the proof of \cite[Theorem D]{FMOsquare} for our hypotheses regarding vanishing elements.

To prove either (1) or (2), arguing by minimal counterexample in each case we can assume that there exists a prime $p$ such that $\fit{G}=\rad{p}{G}=P$ is a Sylow $p$-subgroup of $G$. Since $G$ is supersoluble, then Proposition \ref{INW_supersoluble} yields that every $q$-element ($q\neq p$) $x\in A\cup B$ is vanishing in $G$. Thus we can apply for such an element the class size hypothesis as in \cite[Theorem D (1-2)]{FMOsquare}.

On the other hand, following the proof of \cite[Theorem D (1)]{FMOsquare}, we need to assure that if $N$ is a prefactorised normal subgroup of $G$ which contains $P$ as a Sylow $p$-subgroup, then $N$ inherits the assumptions. Let see that this fact also holds in our case. First, such an $N$ is clearly supersoluble and $N=PH=(N\cap A)(N\cap B)$ where $H\in\hall{p'}{N}$. Hence, if we take a $p$-element $x\in (N\cap A)\cup(B\cap B)$ vanishing in $N$, then it follows that $x$ is vanishing in $G$; otherwise we get by Proposition \ref{INW_supersoluble} that $x\in\ze{P}$ because $\fit{G}=\rad{p}{G}=P\neq 1$, and so Proposition \ref{lemma_non_brough} leads to the fact that $x$ is non-vanishing in $N$, a contradiction. Moreover, all the $q$-elements in $(N\cap A)\cup(N\cap B)$ are vanishing in $G$ by the above paragraph. Thus, in both cases, $i_N(x)$ divides $i_G(x)$ which is square-free. Note that, in particular, if $H=1$, then $N=P$ satisfies the hypotheses of Proposition \ref{knoche_prop}.

The statement (3) runs as in the proof \cite[Theorem D (3)]{FMOsquare}, but applying Proposition \ref{knoche_prop}, which is the vanishing version of \cite[Theorem A]{FMOsquare}.
\end{proof}

\bigskip

\begin{proof}[Proof of Theorem \ref{theoremSquare}]
(1) Considering the smallest prime divisor of $\abs{G}$ and Theorem \ref{theoremp2} (1), we conclude that $G$ is soluble. Hence, it is $p$-soluble for each prime $p$. 
Applying Theorem \ref{p-sol_vanishing}, we get that $G$ is $p$-supersoluble for each prime $p$, so it is supersoluble.

(2-4) These assertions follow from the previous theorem.
\end{proof}

\medskip

In \cite[Theorem 3.2]{BK} the author gives a supersolubility criterion for a group such that every vanishing index of a prime power order element is square-free. We want to highlight that the following consequence of Theorem \ref{theoremSquare} gives more information on the structure of such a group. Moreover, our techniques differ from those used in \cite[Theorem 3.2]{BK}.

\begin{corollary}
Let $G$ be a group, and let assume that $i_G(x)$ is square-free for each prime power order element $x$ vanishing in $G$. Then $G$ is supersoluble, and $G'$ is abelian with elementary abelian Sylow subgroups. Further, $\fit{G}'$ has Sylow $p$-subgroups of order at most $p^2$, for each prime $p$.
\end{corollary}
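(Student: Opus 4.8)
The plan is to recognise this corollary as the special case of Theorem \ref{theoremSquare} obtained from the trivial factorisation $G = A = B$. First I would set $A = B = G$, so that $G = AB$ is a product of the subgroups $A$ and $B$ with $A \cup B = G$. By the first item of the Remark following Definition \ref{definition_core}, any decomposition with $1 \neq G = A$ is automatically a core-factorisation; hence the trivial factorisation $G = GG$ is a core-factorisation, and Theorem \ref{theoremSquare} is applicable to it.

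Next I would verify that the hypothesis transfers verbatim. The assumption of the corollary is that $i_G(x)$ is square-free for every prime power order element $x$ vanishing in $G$; since $A \cup B = G$, this is exactly the hypothesis of Theorem \ref{theoremSquare} that $i_G(x)$ be square-free for every prime power order element $x \in A \cup B$ vanishing in $G$. With the core-factorisation property and the arithmetical condition both in place, I would invoke Theorem \ref{theoremSquare}: its item (1) yields that $G$ is supersoluble, item (2) that $G'$ is abelian, item (3) that $G'$ has elementary abelian Sylow subgroups, and item (4) that $\fit{G}'$ has Sylow $p$-subgroups of order at most $p^2$ for each prime $p$. Assembling these conclusions gives precisely the assertions of the corollary.

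There is no genuine obstacle here, since all of the structural work has already been carried out in Theorem \ref{theoremSquare} and in the more general result preceding it. The only point needing a moment's care is the formal observation that the trivial factorisation satisfies the definition of a core-factorisation and that taking $A = B = G$ neither weakens nor strengthens the arithmetical condition — both of which are immediate. Thus the statement follows directly, and no further argument is required; this is exactly the ``$G = A = B$'' reading announced just before the corollary.
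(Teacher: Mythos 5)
Your proposal is correct and is essentially the paper's own argument: the corollary is stated as an immediate consequence of Theorem \ref{theoremSquare}, obtained precisely by taking the trivial factorisation $G=A=B$, which is a core-factorisation by the first item of the remark following Definition \ref{definition_core}, so that the hypothesis on elements of $A\cup B=G$ coincides with the hypothesis of the corollary. Nothing further is needed.
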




\end{document}